\newtheorem{theorem}{Theorem}[section]
\newtheorem{lemma}[theorem]{Lemma}
\theoremstyle{definition}
\newtheorem{definition}[theorem]{Definition}
\newtheorem{example}[theorem]{Example}
\theoremstyle{remark}
\newtheorem{remark}[theorem]{Remark}
\theoremstyle{proposition}
\newtheorem{proposition}[theorem]{Proposition}
\theoremstyle{corollary}
\newtheorem{corollary}[theorem]{Corollary}
\numberwithin{equation}{section}
\newcommand{\pf}{\noindent\begin {proof}}
\newcommand{\epf}{\end{proof}}
\newcommand{\Hom}{\mbox{\rm Hom}}
\def\Im{\mathop{\rm Im}\nolimits}
\def\Ker{\mathop{\rm Ker}\nolimits}
\def\Coker{\mathop{\rm Coker}\nolimits}
\def\mod{\mathop{\rm mod}\nolimits}
\def\add{\mathop{\rm add}\nolimits}
\def\dim{\mathop{\rm dim}\nolimits}
\def\Hom{\mathop{\rm Hom}\nolimits}
\def\lim{\mathop{\underrightarrow{\rm lim}}\nolimits}
\def\End{\mathop{\rm End}\nolimits}
\def\rad{\mathop{\rm rad}\nolimits}
\def\rad{\mathop{\rm rad}\nolimits}
\def\dim{\mathop{\rm dim}\nolimits}
\def\rad{\mathop{\rm rad}\nolimits}
\def\Det{\mathop{\rm Det}\nolimits}
\def\Soc{\mathop{\rm Soc}\nolimits}
\begin{document}

\title{Minimal right determiners of irreducible morphisms in algebras of type ${\mathbb A}_n$}

\author{Xiaoxing Wu}
\address{}
\curraddr{}
\email{}
\thanks{}

\author{Zhaoyong Huang}
\address{Department of Mathematics, Nanjing University, Nanjing 210093, Jiangsu Province, P.R. China}
\email{wuxiaoxing1990@163.com; huangzy@nju.edu.cn}
\thanks{}

\subjclass[2010]{16G10, 16G70}

\date{}

\dedicatory{}

\keywords{Morphisms determined by objects, Minimal right determiners, Almost split sequences, Algebras of type $\mathbb{A}_n$.}

\begin{abstract}
Let $\Lambda$ be a finite dimensional algebra of type ${\mathbb A}_n$ over a field
with the quiver $Q$ and let $|\Det(\Lambda)|$ be the number of the minimal right determiners
of all irreducible morphisms between indecomposable left $\Lambda$-modules. If $\Lambda$ is a path algebra, then we have
$$|\Det(\Lambda)|=
\begin{cases}
2n-2,   &\mbox{if $p=0$;}\\
2n-p-1, &\mbox{if $p\geq 1$,}
\end{cases}$$
where $p=|\{i\mid i$ is a source in $Q$ with $2\leq i\leq n-1\}|$. If $\Lambda$ is a bound quiver algebra,
then we have
$$
|\Det(\Lambda)|=
\begin{cases}
2n-2, &\mbox{if $r=1$;}\\
2n-p-q-1, &\mbox{if $r\geq 2$,}
\end{cases}
$$
where $q$ is the number of non-zero sink ideals of $\Lambda$ and
$r=|\{i\mid i$ is a sink in $Q$ with $1\leq i\leq n\}|$.
\end{abstract}

\maketitle

\setlength{\baselineskip}{14pt}

\section{Introduction}

In order to classify a certain class of morphisms ending with the same object as well as to generalize almost split morphisms,
Auslander introduced in [2] the notions of functors and morphisms determined by objects.
``Unfortunately, $\cdots$ for over 30 years, Auslander's powerful results did not gain the attraction they deserve" ([11, p.409]).
Recently, some progress on this topic has been made by some authors, see [7--11].

Let $\Lambda$ be an artin algebra and $\mod \Lambda$ the category of finitely generated left $\Lambda$-modules.
Ringel gave in [10, Theorem 1] a formula to calculate a right determiner of a morphism in $\mod \Lambda$,
which is a correction of [3, Theorem 2.6]; and then he reproved in [11, Theorem 3.4] a formula, originally
due to Auslander, Reiten and Smal$\o$ [4], for calculating the minimal right determiner of a morphism in $\mod \Lambda$.
Later on, it was proved in [8, 7] that
for a commutative artin ring $k$, a Hom-finite $k$-linear additive skeletally small category $\mathcal{C}$
is a dualizing $k$-variety if and only if $\mathcal{C}$ has determined morphisms; and a Hom-finite $k$-linear abelian category
$\mathcal{C}$ has Serre duality if and only if $\mathcal{C}$ has right determined epimorphisms and left determined monomorphisms.

Beyond the results mentioned above, it is interesting to determine concretely
the minimal right determiners of a certain class of irreducible morphisms. Certainly, it is rather difficult in general.
The aim of this paper is to determine the minimal right determiners of all irreducible morphisms between indecomposable
modules over a finite dimensional algebra of type $\mathbb{A}_n$.
The paper is organized as follows.

In Section 2, we give some terminology and some preliminary results. In particular, we give a formula for determining
the minimal right determiner of an irreducible morphism between indecomposable left $\Lambda$-modules.
We use $\Det(\Lambda)$ to denote
the set of the minimal right determiners of all irreducible morphisms between indecomposable left $\Lambda$-modules.
For a set $S$, we use $|S|$ to denote the cardinality of $S$.

Let $\Lambda$ be a finite dimensional algebra of type $\mathbb{A}_n$ over a field.
In Section 3, we first prove that for an irreducible morphism $f$ between indecomposable left $\Lambda$-modules,
if $f$ is epic, then the minimal right determiner of $f$
is the end term in an almost split sequence with a unique middle term;
if $f$ is monic, then the minimal right determiner of $f$ is indecomposable projective.
This result is crucial in the sequel.
Let $Q$ be the quiver of $\Lambda$ and $p=|\{i\mid i$ is a source in $Q$ with $2\leq i\leq n-1\}|$.
One of our main results is the following

\begin{theorem} \label{1.1}
If $\Lambda$ is a path algebra, then we have
$$|\Det(\Lambda)|=
\begin{cases}
2n-2,   &\mbox{if $p=0$;}\\
2n-p-1, &\mbox{if $p\geq 1$.}
\end{cases}$$
\end{theorem}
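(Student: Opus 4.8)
The plan is to exploit the dichotomy recalled just above to split $\Det(\Lambda)$ into two pieces that can be enumerated separately. Write $\mathcal{E}$ for the set of minimal right determiners of epic irreducible morphisms and $\mathcal{M}$ for those of monic irreducible morphisms, so that $\Det(\Lambda)=\mathcal{E}\cup\mathcal{M}$. By that result every element of $\mathcal{E}$ is the end term of an almost split sequence with a unique middle term, hence is non-projective, while every element of $\mathcal{M}$ is indecomposable projective; thus $\mathcal{E}\cap\mathcal{M}=\emptyset$ and $|\Det(\Lambda)|=|\mathcal{E}|+|\mathcal{M}|$. It therefore suffices to compute the two cardinalities from $Q$. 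Throughout I use that $\Lambda$ is hereditary of type $\mathbb{A}_n$, so its indecomposables are the thin interval modules $[a,b]$ with $1\le a\le b\le n$, its Auslander--Reiten quiver is the associated mesh, and, by the formula of Section 2, the minimal right determiner of an epic $f$ is $\tau^{-1}(\Ker f)$ and of a monic $f$ is the projective cover of $\coker f$.

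To compute $|\mathcal{M}|$ I would decide exactly which indecomposable projectives occur. For a monic irreducible $f$ the module $\coker f$ has simple top, so its projective cover is a single $P_j$, and $\mathcal{M}\subseteq\{P_1,\dots,P_n\}$. Taking for $f$ the irreducible inclusions $R\hookrightarrow P_j$, with $R$ an indecomposable summand of $\rad P_j$, gives $\coker f=P_j/R$, whose top is $S_j$; hence $P_j\in\mathcal{M}$ whenever $\rad P_j\neq 0$, that is, whenever $j$ is not a sink of $Q$. Conversely, if $j$ is a sink then $P_j=S_j$ is simple, and $S_j$ cannot be the projective cover of the cokernel of a monic irreducible morphism, for that would force $\coker f\cong S_j$ to be projective and the defining short exact sequence to split. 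Thus $\mathcal{M}=\{\,P_j : j\text{ is not a sink of }Q\,\}$ and $|\mathcal{M}|=n-(\text{number of sinks of }Q)$.

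To compute $|\mathcal{E}|$ I would identify $\mathcal{E}$ with the set of non-projective indecomposables $M$ that receive a single irreducible morphism, i.e. the end terms of almost split sequences with indecomposable middle term. One inclusion is the statement recalled above; for the reverse, each such $M$ equals $\tau^{-1}(\Ker(E\to M))$ for the sink map $E\to M$ of the almost split sequence ending at $M$, which is epic irreducible since $E$ is indecomposable, so $M\in\mathcal{E}$. These $M$ I would then count by a mesh computation: comparing the total number of arrows of the Auslander--Reiten quiver with the in-degrees of the projective vertices (the in-degree of $P_j$ being the number of indecomposable summands of $\rad P_j$) shows that the number of non-projective vertices with exactly one entering arrow equals $\sum_j(\text{out-degree of }j\text{ in }Q)$, namely the number of arrows of $Q$. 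Hence $|\mathcal{E}|=n-1$, independently of the orientation.

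It remains to assemble $|\Det(\Lambda)|=|\mathcal{E}|+|\mathcal{M}|$ and to rewrite the number of sinks of $Q$ in terms of $p$. Encoding the orientation as a word in $\{L,R\}$, one letter per arrow, the interior sources are the occurrences of $LR$, the interior sinks the occurrences of $RL$, and the two endpoints contribute according to the first and last letters; since occurrences of $LR$ and $RL$ interlace, this produces a clean relation between the number of sinks and $p$, corrected by an endpoint term. I expect this endpoint bookkeeping to be the main obstacle: it is precisely the behaviour at the two ends of the $\mathbb{A}_n$-quiver --- which extremal vertices are sinks, and hence which extremal projectives are realized as determiners --- that fixes the additive constant and separates the degenerate case $p=0$ from the case $p\ge 1$. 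Carrying out this bookkeeping is what turns the two cardinalities above into the stated two-case formula.
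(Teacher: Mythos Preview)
Your decomposition $\Det(\Lambda)=\mathcal{E}\sqcup\mathcal{M}$ with $|\mathcal{E}|=n-1$ is fine and matches the paper's Corollary~3.7. The gap is in your computation of $\mathcal{M}$: the claim that for a monic irreducible $f$ the minimal right determiner is the projective cover of $\Coker f$ is false. By the determiner formula (Theorem~2.4), $C(f)$ is the direct sum of the indecomposable projectives that \emph{almost factor through} $f$; in type $\mathbb{A}_n$ this is a single $P(j)$, but $j$ is determined by the \emph{socle} of $\Coker f$, not its top (Corollary~3.2). Concretely, take $Q:\ 1\leftarrow 2\rightarrow 3$ and the irreducible inclusion $f:P(1)=S(1)\hookrightarrow P(2)=[1,3]$. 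Then $\Coker f=[2,3]$ has top $S(2)$ but socle $S(3)$; one checks directly that $P(3)=S(3)$ almost factors through $f$ while $P(2)$ does not, so $C(f)=P(3)$, not $P(2)$. Thus your conclusion $\mathcal{M}=\{P_j:j\text{ not a sink}\}$ is wrong: in this example both sinks $1,3$ give projectives in $\mathcal{M}$, while the non-sink $2$ (an interior source) gives a projective \emph{not} in $\mathcal{M}$.

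The correct dichotomy, which is what the paper establishes (Proposition~3.9 and the proof of Theorem~3.13), is governed by sources rather than sinks: $P(i)$ fails to be a minimal right determiner precisely when $i$ is an interior source (together with one further exclusion in the degenerate case $p=0$, where the unique sink is also excluded). This is why the answer is $2n-p-1$ for $p\geq 1$ and why a separate case is needed at $p=0$; your ``endpoint bookkeeping'' in terms of sinks cannot recover this, because the underlying identification of $\mathcal{M}$ is already off by a duality.
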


After introducing the notion of sink ideals of $\Lambda$, we get the following

\begin{theorem} \label{1.2}
If $\Lambda$ is a bound quiver algebra, then we have
$$
|\Det(\Lambda)|=
\begin{cases}
2n-2, &\mbox{if $r=1$;}\\
2n-p-q-1, &\mbox{if $r\geq 2$,}
\end{cases}
$$
where $q$ is the number of non-zero sink ideals of $\Lambda$
and $r=|\{i\mid i$ is a sink in $Q$ with $1\leq i\leq n\}|$.
\end{theorem}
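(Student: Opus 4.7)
The plan is to deduce Theorem \ref{1.2} from Theorem \ref{1.1} by quantifying how the relations defining the bound quiver algebra, tracked through the sink ideals, modify the set of minimal right determiners. By the preliminary classification, each minimal right determiner is either the end term of an almost split sequence with indecomposable middle term (when the irreducible morphism is epic) or an indecomposable projective module (when the irreducible morphism is monic). Hence I would partition $\Det(\Lambda)$ accordingly and compare each part with the corresponding set for the associated path algebra $kQ$.

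First I would treat the case $r = 1$. A single sink in a type $\mathbb{A}_n$ quiver forces all arrows to flow toward it from the two ends of the path, so no internal vertex can be a source; in particular $p = 0$. I would show that the collection of minimal right determiners is rigid under the imposition of any system of relations compatible with this orientation: although a relation can truncate certain interval modules and replace some of them by proper submodules, it neither creates new distinct determiners nor identifies previously distinct ones. The conclusion $|\Det(\Lambda)| = 2n - 2$ then follows by direct appeal to Theorem \ref{1.1} in the $p = 0$ case.

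For the case $r \geq 2$, I would use the structural fact that $r \geq 2$ forces $p \geq 1$, because two sinks in a type $\mathbb{A}_n$ quiver must be separated by at least one internal source. Theorem \ref{1.1} then gives $|\Det(kQ)| = 2n - p - 1$. The main point is to argue that passing from $kQ$ to $\Lambda = kQ/I$ removes exactly one element from the set of minimal right determiners for each non-zero sink ideal. Concretely, for each non-zero sink ideal $I_s$ supported at a sink $s$, I would locate a particular end term of an AR sequence in $\mod kQ$ whose role in $\mod \Lambda$ is absorbed by, or coincides with, another determiner already present. Summing over all $q$ non-zero sink ideals, and verifying that these identifications act on pairwise disjoint pairs of determiners, yields $|\Det(\Lambda)| = 2n - p - q - 1$.

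The main obstacle I anticipate is establishing the independence claim: proving that the $q$ identifications coming from distinct non-zero sink ideals act on pairwise disjoint pairs of determiners, so that the reductions sum rather than overlap. This will likely require a case-by-case analysis localized at each sink, distinguishing endpoint sinks from internal sinks and examining the length of the generating relation in each sink ideal. A secondary technical point is to confirm that the $r = 1$ case genuinely produces no reduction regardless of how many non-zero sink ideals are present; I would handle this by observing that the unique-sink configuration leaves no room for the local identification mechanism that drives the reduction in the multi-sink case.
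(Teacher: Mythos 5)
Your overall bookkeeping is right --- the partition of $\Det(\Lambda)$ into determiners of epic irreducible morphisms and projective determiners of monic ones, the observation that $r=1$ forces $p=0$, and the net reduction by $q$ --- but the mechanism you propose for the case $r\geq 2$ is not the one that actually operates, and I do not think it can be made to work as stated. You plan to locate, for each non-zero sink ideal, ``a particular end term of an AR sequence in $\mod kQ$ whose role in $\mod \Lambda$ is absorbed by, or coincides with, another determiner already present,'' i.e.\ you expect the reduction to come from identifications among previously distinct determiners. In fact no identifications occur. The determiners of epic irreducible morphisms are always the $n-1$ pairwise distinct non-projective modules $M(V_{\alpha_i})$, one for each arrow (Lemma 2.8, Theorem 3.5(2), Corollary 3.7); imposing relations changes \emph{which} modules these are but never their number. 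The entire reduction takes place on the monic side: for a sink $i$ with $J_i\neq 0$, the relations in the sink ideal kill $\Hom_{\Lambda}(P(i),P(s))$ for every adjacent source $s$, so $P(i)$ no longer almost factors through any inclusion $\rad P(s)\hookrightarrow P(s)$ and simply drops out of $\Det(\Lambda)$ --- it is not absorbed by anything. Consequently your anticipated main obstacle (disjointness of the identified pairs) is a non-issue, since each non-zero sink ideal removes the single projective $P(i)$ attached to its own sink and distinct sinks give distinct projectives; but the real work, which your proposal does not engage, is the case analysis behind Definition 3.14 showing that $J_i\neq 0$ is exactly the condition under which $P(i)$ fails to almost factor through any monic irreducible morphism (endpoint sinks, first/last sinks, and internal sinks flanked by two sources all behave differently, and for an internal sink one must check both arms).

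A secondary caution: your strategy of deducing the bound-quiver count from Theorem \ref{1.1} by perturbing $\mod kQ$ is riskier than it looks, since the AR quiver of $kQ/I$ is not obtained from that of $kQ$ by a small modification. The paper avoids this by running the string-algebra combinatorics of Butler--Ringel directly on $\Lambda$, so that the two counts $(n-1)$ and $(n-p-q)$ are established intrinsically rather than by comparison with the path algebra; your $r=1$ step, where you assert that any system of relations leaves the determiner set ``rigid,'' would need the same intrinsic treatment rather than an appeal to Theorem \ref{1.1}.
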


The proofs of these two theorems are constructive, from which we can determine $\Det(\Lambda)$.
Finally, in Section 4, we give some examples to illustrate the obtained results.

\section{Preliminaries}

Throughout this paper, $\Lambda$ is a finite dimensional algebra over a field $K$
with the quiver $Q$ and $\mod \Lambda$ is the category of finitely generated left $\Lambda$-modules.
For an arrow $\alpha$ in $Q$, $s(\alpha)$ and $e(\alpha)$ are the starting and end points of $\alpha$, respectively.
We use $P(i)$, $I(i)$ and $S(i)$ to denote the indecomposable projective, injective
and simple modules corresponding to the vertex $i$, respectively. Let $M$ be in $\mod \Lambda$. We use
$\add_{\Lambda}M$ to denote the full subcategory of $\mod \Lambda$ consisting of direct summands of
finite direct sums of copies of $M$; and use $\dim M$ and $\textbf{dim}M$ to denote the dimension
as a $K$-vector space and the dimension vector of $M$ respectively. For a set $S$, we use $|S|$
to denote the cardinality of $S$.

The original definition of morphisms determined by objects in [2] is based on the notion
of subfunctors determined by objects. However, in the relevant papers, ones prefer the following definition
since it is easier to understand.

\begin{definition} {\rm ([10, 11])}
For a module $C\in \mod \Lambda$, a morphism $f\in \Hom_{\Lambda}(X,Y)$ is said to be {\bf right determined}
by $C$ (simply {\bf $C$-right determined})
if the following condition is satisfied: given for any $f'\in\Hom_{\Lambda}(X',Y)$ such that $f'\phi$
factors through $f$ for all $\phi\in\Hom_{\Lambda}(C,X')$, then $f'$ factors through $f$; that is,
in the following diagram, if there exists $\phi'\in\Hom_{\Lambda}(C,X)$ such that $f'\phi=f\phi'$,
then there exists $h\in\Hom_{\Lambda}(X',X)$ such that $f'=fh$.
$$\xymatrix{C \ar[r]^{\phi} \ar@{=}[d] & X' \ar[r]^{f'} \ar@{-->}[d]^{h} & Y \ar@{=}[d] \\
C \ar@{-->}[r]^{\phi'} & X \ar[r]^{f} & Y.}$$
In this case, $C$ is called a {\bf right determiner} of $f$.
\end{definition}

Auslander proved in [2] that a morphism $f\in \Hom_{\Lambda}(B,C)$ is right almost split
if and only if the following conditions are satisfied: (a) the endomorphism ring $\End_{\Lambda}(C)$ is local;
(b) $\Im\Hom_{\Lambda}(C,f)$ is the unique maximal ideal of $\End_{\Lambda}(C)$; and
(c) $f$ is $C$-right determined. It means that a morphism right determined by a module is a generalization
of a right almost split morphism.

\begin{definition}
([10, p.984])
Given a morphism $f\in\Hom_{\Lambda}(B,C)$ with $B=B_1\oplus B_2$ such that $B_1 \subseteq \Ker f$
and $f|_{B_2}$ is right minimal, then we call $\Ker f|_{B_2}$ the {\bf intrinsic kernel} of $f$.
\end{definition}

\begin{definition} {\rm ([11, p.418])}
An indecomposable projective module $P\in \mod \Lambda$ is said to {\bf almost factor through}
$f\in \Hom_{\Lambda}(M,N)$ provided that there exists a commutative diagram of the following form
$$\xymatrix{
\rad P \ar[d] \ar[r]^{i}
& P \ar[d]^{h}  \\
M \ar[r]^{f} & N,}
$$
where $i$ is the inclusion map and $\rad P$ is the radical of $P$, such that $\Im h$ is not contained in $\Im f$.
\end{definition}

The following is the determiner formula.

\begin{theorem}
Let $f$ be a morphism in $\mod \Lambda$. Let $C(f)$ be the direct sum of the indecomposable
modules of the form $\tau^{-1}K$, where $\tau$ denotes the Auslander-Reiten translation
and $K$ is an indecomposable direct summand of
the intrinsic kernel of $f$ and of the indecomposable projective modules which
almost factor through $f$, one from each isomorphism class. Then we have
\begin{enumerate}
\item[(1)] $f$ is right $C$-determined if and only if $C(f)\in\add_{\Lambda}C$.
\item[(2)] If $f$ is irreducible, then $C(f)$=$\tau^{-1}\Ker f\oplus(\oplus P_i)$,
where all $P_i$ are pairwise non-isomorphic indecomposable projective modules almost factoring through $f$.
\end{enumerate}
\end{theorem}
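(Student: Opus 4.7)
The plan is to establish part (1) via a functor-category reformulation and then deduce part (2) as an immediate consequence. For part (2), once (1) is available, the additional ingredient is that an irreducible morphism $f\colon M\to N$ between indecomposables is automatically right minimal: if $f$ vanished on a nonzero direct summand of $M$, then $f$ would factor nontrivially through a proper direct sum decomposition, contradicting irreducibility. Hence the intrinsic kernel of such an $f$ coincides with $\Ker f$, and the stated form $C(f)=\tau^{-1}\Ker f\oplus(\oplus P_i)$ is then read directly from the definition of $C(f)$.

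For part (1), I would work with the cokernel functor $F=\coker\Hom_\Lambda(-,f)$ on $\mod\Lambda$. A morphism $f'\colon X'\to N$ represents an element $[f']\in F(X')$, and the hypothesis that $f'\phi$ factors through $f$ for every $\phi\in\Hom_\Lambda(C,X')$ says precisely that the image of $[f']$ under the natural map $F(X')\to\Hom_K(\Hom_\Lambda(C,X'),F(C))$ vanishes. Thus $f$ is $C$-right determined exactly when this map is injective for every $X'$, which in turn is equivalent to every simple subfunctor of $F$ being isomorphic to a simple functor supported on $\add_\Lambda C$. The proof then reduces to identifying these simple subfunctors with the indecomposable direct summands of $C(f)$.

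The identification splits according to whether the corresponding indecomposable module is projective. For the non-projective simples, the Auslander-Reiten formula $D\underline{\Hom}_\Lambda(Y,X)\cong\Ext^1_\Lambda(X,\tau Y)$ attaches to each such simple an indecomposable module $Y$ whose translate $\tau^{-1}Y$ appears as an indecomposable summand of the intrinsic kernel; after reducing to the right minimal part of $f$ this produces the $\tau^{-1}K$ contribution. The projective simples will be the main obstacle, since $\tau^{-1}$ annihilates projectives and the stable-category argument loses them entirely; the device that recovers the missing simples is precisely the ``almost factors through'' condition, where the diagram $\rad P\hookrightarrow P$ together with a lift $P\to N$ whose image is not contained in $\im f$ encodes exactly a projective-type simple subfunctor of $F$, one representative per isomorphism class being required. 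Assembling the two contributions completes (1), from which (2) follows at once.
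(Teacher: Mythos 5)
Your treatment of (2) is essentially the paper's: the paper observes that the intrinsic kernel is always a direct summand of the kernel and that for an irreducible morphism the two coincide, which is exactly your point that irreducibility forces right minimality (if $f$ vanished on a nonzero summand $B_1$ of its source, then $f=g\pi$ with $\pi$ the projection onto the complement, $\pi$ is not a split monomorphism, so irreducibility would force $g$ and hence $f$ to be a split epimorphism). For (1) the paper gives no argument at all, merely citing [11, Theorem 3.4] (cf.\ [4, Corollary XI.2.3]), whereas you sketch the functor-category proof that lies behind that citation: right $C$-determinacy of $f$ is injectivity of $F(X')\to\Hom_K(\Hom_\Lambda(C,X'),F(C))$ for $F=\Coker\Hom_\Lambda(-,f)$, hence equivalent to every simple subfunctor of $F$ being indexed by a module in $\add_\Lambda C$, and these simples are matched with the summands of $C(f)$ (Auslander--Reiten formula for the non-projective ones, the ``almost factors through'' condition for the projective ones). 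This is the correct strategy and is precisely Ringel's proof; what it buys over the paper's citation is an indication of why the formula holds, at the cost that the two identifications of simple subfunctors --- which constitute the entire content of the cited theorem --- are asserted rather than proved. One wording slip to fix: in the non-projective case it is $Y$ itself that must be a direct summand of the intrinsic kernel, with $\tau^{-1}Y$ indexing the corresponding simple subfunctor and appearing in $C(f)$; as written you have the shift on the wrong side. Neither point is a substantive error.
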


\begin{proof}
(1) See [11, Theorem 3.4] (cf. [10, Theorem 2] and [4, Corollary XI.2.3]).

(2) Note that the intrinsic kernel of any morphism is a direct summand of its kernel ([11, p.419]).
So the intrinsic kernel of an irreducible morphism coincides with its kernel and the assertion follows.
\end{proof}

The first assertion in this theorem suggests to call $C(f)$ the {\bf minimal right determiner} of $f$ ([10, 11]).
We use $\Det(\Lambda)$ to denote the set of the minimal right determiners of all irreducible
morphisms between indecomposable modules in $\mod \Lambda$.

\begin{lemma}
Let $f$: $B\rightarrow C$ be a minimal right almost split morphism. Then $C(f)=C$.
\end{lemma}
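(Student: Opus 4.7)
The plan is to split into cases according to whether the target $C$ is projective or not, and in each case read off the two pieces appearing in the formula of Theorem 2.4, namely $\tau^{-1}(\text{intrinsic kernel})$ and the indecomposable projectives almost factoring through $f$.

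First, suppose $C$ is not projective. Then the minimal right almost split morphism $f\colon B\to C$ fits into an almost split sequence $0\to \tau C\to B\to C\to 0$; in particular $f$ is right minimal and surjective, so the intrinsic kernel of $f$ equals $\Ker f=\tau C$, which is indecomposable. Applying $\tau^{-1}$ gives the indecomposable summand $C$. To finish this case I would show that no indecomposable projective $P$ almost factors through $f$: any diagram as in Definition 2.3 forces $\Im h\subseteq C=\Im f$, contradicting the requirement $\Im h\not\subseteq\Im f$. Hence $C(f)=C$.

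Next, suppose $C$ is projective. Then the minimal right almost split morphism is the inclusion $f\colon\rad C\hookrightarrow C$, which is a monomorphism, so $\Ker f=0$ and the intrinsic kernel is zero. It remains to identify the indecomposable projectives $P$ almost factoring through $f$. For any $h\colon P\to C$, the image $h(\rad P)$ lies in $\rad C=\Im f$, so the factorization condition on $\rad P$ is automatic; thus $P$ almost factors through $f$ iff $\Im h\not\subseteq\rad C$, i.e.\ iff $h$ is an epimorphism. Since $P$ is projective and $C$ is indecomposable, such an $h$ splits and forces $P\cong C$. Therefore the unique indecomposable projective (up to iso) almost factoring through $f$ is $C$ itself, and again $C(f)=C$.

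I do not expect a real obstacle: the argument is essentially a direct bookkeeping with the two summands in the formula of Theorem 2.4, and the only point that requires a little care is verifying the non-containment condition $\Im h\not\subseteq\Im f$ in each case. The projective case relies on $\rad C$ being the unique maximal submodule of $C$ (which holds because $C$ is indecomposable projective, so $\End_\Lambda(C)$ is local), while the non-projective case relies only on the surjectivity of the right map in an almost split sequence.
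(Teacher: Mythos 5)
Your proof is correct, but it goes by a genuinely different route than the paper's. The paper's argument is a one-liner: by Auslander's characterization quoted after Definition 2.1, a right almost split morphism $f\colon B\to C$ is $C$-right determined, so Theorem 2.4(1) gives $C(f)\in\add_{\Lambda}C$; since $C$ is indecomposable and $C(f)$ is non-zero, $C(f)=C$. You instead compute $C(f)$ directly from the two ingredients of the determiner formula, splitting into the projective and non-projective cases, and both computations are sound: in the non-projective case the intrinsic kernel is $\tau C$ (right minimality forces $B_1=0$ in Definition 2.2) and surjectivity of $f$ kills the almost-factoring condition; in the projective case the kernel vanishes and the unique projective almost factoring through $\rad C\hookrightarrow C$ is $C$ itself, since $h(\rad P)\subseteq\rad C$ always holds and $\Im h\not\subseteq\rad C$ forces $h$ to be epi onto the indecomposable projective $C$, hence split. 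One small slip in wording: the splitting of $h\colon P\twoheadrightarrow C$ comes from the projectivity of $C$ (lift $\mathrm{id}_C$ through $h$), not of $P$; the conclusion $P\cong C$ is unaffected. Your approach buys a self-contained verification that does not invoke Auslander's theorem that right almost split morphisms are determined by their codomain, at the cost of case analysis; the paper's approach is shorter but leans on that external input together with the minimality built into $C(f)$.
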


\begin{proof}
Because $f$ is right $C$-determined and $C$ is indecomposable, the assertion follows.
\end{proof}

We say that $\Lambda$ is of {\bf type $\mathbb{A}_n$} if $Q$ is
of type $\mathbb{A}_n$, that is, the underlying unoriented graph
of $Q$ is the Dynkin diagram of type $\mathbb{A}_n$:
$$1\frac{\alpha_1}{}2 \frac{\alpha_2}{} 3\frac{\alpha_3}{}\cdots \frac{\alpha_{n-2}}{} n-1\frac{\alpha_{n-1}}{}n.$$
A subquiver of $Q$ between $i$ and $j$ with $i<j$ is denoted by $<i,j>$.

\begin{lemma}
Let $\Lambda$ be of type $\mathbb{A}_n$. Let $P\in \mod \Lambda$ be indecomposable projective and $0\neq f\in \Hom_{\Lambda}(M,N)$
monic with $M,N\in\mod \Lambda$ indecomposable. If $P$ almost factors through $f$, then
$$\dim \Hom_{\Lambda}(P,M)=0\ and \ \dim \Hom_{\Lambda}(P,N)=1.$$
\end{lemma}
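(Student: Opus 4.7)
The plan is to leverage the well-known feature of type $\mathbb{A}_n$ algebras that $\dim_K\Hom_{\Lambda}(X,Y)\leq 1$ for any two indecomposable modules $X,Y\in\mod\Lambda$ (the indecomposables are string/interval modules on a linearly oriented underlying graph, and the Hom spaces between them are at most one-dimensional). Once this is in hand, both conclusions of the lemma fall out from the commutative diagram defining ``almost factors through'' together with the monicity of $f$.

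First I would unpack the hypothesis that $P$ almost factors through $f$ to extract a morphism $h\colon P\to N$ with $\Im h\not\subseteq\Im f$ and $hi=fj$, where $j\colon\rad P\to M$ is the left vertical map of the diagram. Since $\Im h\not\subseteq\Im f$ forces $\Im h\neq 0$, we have $h\neq 0$; combined with the at-most-one-dimensional Hom principle applied to the indecomposables $P$ and $N$, this immediately gives $\dim\Hom_{\Lambda}(P,N)=1$.

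For the second assertion I would argue by contradiction. Suppose $\dim\Hom_{\Lambda}(P,M)\geq 1$ and pick a nonzero $g\in\Hom_{\Lambda}(P,M)$. Because $f$ is monic, the composite $fg\colon P\to N$ is nonzero, hence spans the one-dimensional space $\Hom_{\Lambda}(P,N)$. Therefore $h=\lambda\,fg$ for some $\lambda\in K$, and consequently
\[
\Im h=\lambda\cdot f(\Im g)\subseteq\Im f,
\]
contradicting the defining property $\Im h\not\subseteq\Im f$ of the almost factorization. This forces $\dim\Hom_{\Lambda}(P,M)=0$.

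The only genuine subtlety, and the step I expect to require the most care, is the appeal to the one-dimensional Hom principle. For path algebras of type $\mathbb{A}_n$ this is standard (it can be read off the shape of the AR-quiver, or from the classification of indecomposables as interval modules), but for a bound quiver algebra of type $\mathbb{A}_n$ one must also invoke the fact that it is a string (indeed Nakayama on each arm) algebra whose indecomposables are still interval/string modules, so the bound $\dim\Hom_{\Lambda}(X,Y)\leq 1$ between indecomposables is preserved. Once that is granted, the rest of the argument is purely formal manipulation of the diagram defining ``almost factors through.''
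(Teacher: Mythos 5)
Your proposal is correct and follows essentially the same route as the paper's own proof: both rest on the bound $\dim_K\Hom_{\Lambda}(X,Y)\leq 1$ for indecomposables over an algebra of type $\mathbb{A}_n$ (the paper cites [12, Section 3.1.4.1]), deduce $\dim\Hom_{\Lambda}(P,N)=1$ from $h\neq 0$, and derive the contradiction $\Im h=\Im fg\subseteq\Im f$ from a nonzero $g\in\Hom_{\Lambda}(P,M)$ and the monicity of $f$. No substantive differences.
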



\begin{proof}
By [12, Section 3.1.4.1], we have that $\dim\Hom_{\Lambda}(X,Y)\leq 1$ for any indecomposable
modules $X,Y\in \mod \Lambda$. Let $P\in \mod \Lambda$ be indecomposable projective
almost factoring through $f\in \Hom_{\Lambda}(M,N)$
with $M,N$ indecomposable. Then $\dim\Hom_{\Lambda}(P,N)\neq 0$ and $\dim\Hom_{\Lambda}(P,N)=1$.

Let $0\neq h\in \Hom_{\Lambda}(P,N)$ be as in Definition 2.3.
If $\dim \Hom_{\Lambda}(P,M)=1$, then there exists $0\neq g\in\Hom_{\Lambda}(P,M)$.
Because $f$ is monic by assumption, we have $fg\neq 0$.
Because $h,fg \in \Hom_{\Lambda}(P,N)$ and $\dim\Hom_{\Lambda}(P,N)=1$,
we have $\Im h=\Im fg\subseteq \Im f$, which is a contradiction.
Thus $\dim \Hom_{\Lambda}(P,M)=0$.
\end{proof}





The following observation might be known.

\begin{lemma}
Let
$$0\to L  \buildrel {\binom {f_1} {g_1}} \over \longrightarrow
M_1\oplus M_2 \buildrel {(g_2, f_2)} \over \longrightarrow M \to 0$$
be an almost split sequence with $M_1$ and $M_2$ indecomposable.
Then $f_1, f_2$ (resp. $g_1, g_2$) are either both epic or both monic.
\end{lemma}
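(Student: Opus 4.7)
The plan is a direct $K$-dimension count that exploits two structural facts about $\mathbb{A}_n$-type algebras. First, indecomposable modules are thin interval representations, so every irreducible morphism between two of them is either monic (an inclusion of interval modules) or epic (a projection); in particular each of $f_1,g_1,f_2,g_2$ is monic or epic. Second, none of these four components can be an isomorphism, because an irreducible morphism is neither a split monomorphism nor a split epimorphism, while an isomorphism between indecomposables would be both; equivalently, if say $f_1$ were an isomorphism, then $(f_1^{-1},0)\colon M_1\oplus M_2\to L$ would be a retraction of $\binom{f_1}{g_1}$, splitting the sequence and contradicting that it is almost split.

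Having these in hand, I would rule out the ``mixed'' case directly. Suppose for contradiction that $f_1\colon L\to M_1$ is monic while $f_2\colon M_2\to M$ is epic. Additivity of $K$-dimension along the short exact sequence gives
\[
\dim M_1-\dim L=\dim M-\dim M_2.
\]
The left-hand side is $\geq 0$ by the hypothesis on $f_1$ and the right-hand side is $\leq 0$ by the hypothesis on $f_2$; hence both vanish. Then $f_1$ is a monomorphism between modules of equal $K$-dimension, so an isomorphism, contradicting the previous paragraph. The case ``$f_1$ epic and $f_2$ monic'' collapses in the same way, which establishes that $f_1$ and $f_2$ are simultaneously monic or simultaneously epic. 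The argument for the pair $(g_1,g_2)$ is identical after interchanging the roles of $M_1$ and $M_2$.

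I do not anticipate any substantive obstacle. The only place the $\mathbb{A}_n$ hypothesis enters is the monic-or-epic dichotomy for each of the four components; once that is available, the rest is a two-line dimension count combined with the non-splitness of almost split sequences.
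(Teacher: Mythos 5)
Your proof is correct and follows essentially the same route as the paper's: both rule out the mixed case by additivity of dimension along the short exact sequence, forcing the offending component to be an isomorphism and contradicting irreducibility. The only differences are cosmetic --- the paper counts dimension vectors (showing $\Coker f_1$ and $\Ker f_2$ both vanish) while you count $K$-dimensions --- though note that the monic-or-epic, non-isomorphism dichotomy for the components is a general fact about irreducible morphisms over any artin algebra (the paper cites [1, Section IV.1]), so your appeal to the $\mathbb{A}_n$ interval structure is unnecessary and would needlessly restrict a lemma that is stated without that hypothesis.
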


\begin{proof}
We have
$$\textbf{dim} L + \textbf{dim} N=\textbf{dim} M_1 + \textbf{dim} M_2.$$
To prove that $f_1, f_2$ are either both epic or both monic, it suffices to prove that
if $f_1$ is monic, then so is $f_2$. The other assertions follow similarly.
It is well known that $f_i$ is either monic or epic but non-isomorphic for $i=1,2$
(see [1, Section IV.1]). Let $f_1$ be monic. If $f_2$ is epic, then we have
$$\textbf{dim} \Coker f_1=\textbf{dim} M_1-\textbf{dim} L,$$
$$\textbf{dim} \Ker f_2=\textbf{dim} M_2-\textbf{dim} N.$$
So $\textbf{dim} \Coker f_1+\textbf{dim} \Ker f_2=0$ and $\Coker f_1=0= \Ker f_2$,
and hence both $f_1$ and $f_2$ are isomorphic, which is a contradiction.

Similarly, we get that $g_1, g_2$ are either both epic or both monic.
\end{proof}

Recall from [6] that $A=KQ^{op}/I$ with $I$ an ideal of $KQ^{op}$
is called a {\bf string algebra} if the following conditions are satisfied:
\begin{enumerate}
\item[(1)]  Any vertex of $Q$ is starting point of at most two arrows.
\item[($1^{op}$)] Any vertex of $Q$ is end point of at most two arrows.
\item[(2)]  Given an arrow $\beta$, there is at most one arrow $\gamma$ with $s(\beta)=e(\gamma)$
and $\beta \gamma \notin I$.
\item[($2^{op}$)] Given an arrow $\gamma$, there is at most one arrow $\beta$ with $s(\beta)=e(\gamma)$
and $\beta \gamma \notin I$.
\item[(3)] Given an arrow $\beta$, there is some bound $n(\beta)$ such that any path $\beta_1 \cdots \beta_{n(\beta)}$
with $\beta_1 = \beta$ contains a subpath in $I$.
\item[($3^{op}$)] Given an arrow $\beta$, there is some bound $n^{\prime}(\beta)$ such that any path
$\beta_1 \cdots \beta_{n^{\prime}(\beta)}$ with $\beta_{n^{\prime}(\beta)} = \beta$ contains a subpath in $I$.
\end{enumerate}

Let $\Lambda$ be of type $\mathbb{A}_n$. Then it is trivial that $\Lambda$ is a string algebra.
By [12, Section 3.1.1], there are only two types of almost split sequences in
$\mod\Lambda$, that is, the middle term in an almost split sequence is indecomposable or is a direct
sum of two indecomposable modules:
$$0\to L \to M \to N \to 0\eqno{(2.1)}$$
and
$$0\to L \to M_1\oplus M_2 \to N \to 0.\eqno{(2.2)}$$

\begin{lemma}
Let $\Lambda$ be of type $\mathbb{A}_n$. Then we have
\begin{enumerate}
\item[(1)] The only almost split sequences in $\mod \Lambda$ having a unique middle term are those of the form
$$0 \to U(\beta) \to N(\beta) \to V(\beta) \to 0$$
with $\beta$ an arrow of $Q$.
\item[(2)]
The number of almost split sequences of type (2.1) in $\mod \Lambda$ is $n-1$.
\end{enumerate}
\end{lemma}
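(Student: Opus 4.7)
The plan is to invoke the classification of almost split sequences in string algebras recorded in \cite[Section 3.1.1]{12}, which applies here because every algebra of type $\mathbb{A}_n$ is a string algebra. Since the only two possibilities for the middle term are those in (2.1) and (2.2), the content of part (1) is to identify exactly which indecomposables occur as the end terms of a sequence of type (2.1), and the content of part (2) is to count these sequences.

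For part (1), I would use the fact that every indecomposable module in $\mod\Lambda$ is a string module whose underlying string is an interval subquiver $<i,j>$, and that the middle term of the almost split sequence ending at such a module is produced by two independent local constructions at the two endpoints of the string (adding a hook or deleting a cohook). Each endpoint at which a non-trivial operation is possible contributes one indecomposable summand, so the middle term is indecomposable precisely when exactly one endpoint permits a non-trivial operation. A case analysis on the endpoints, using the type-$\mathbb{A}_n$ shape, shows that this can occur only for the string associated with a single arrow $\beta$ of $Q$; the resulting sequence has the form $0\to U(\beta)\to N(\beta)\to V(\beta)\to 0$, where $N(\beta)$ is the string module attached to $\beta$ and $U(\beta)$, $V(\beta)$ are the corresponding submodule and quotient module. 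Conversely, for each arrow $\beta$ of $Q$ this sequence is genuinely almost split, which is checked by computing its $\Hom$-spaces (or directly from the string-algebra recipe).

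For part (2), once the correspondence $\beta\longmapsto \bigl(U(\beta),N(\beta),V(\beta)\bigr)$ is established, I would observe that it is a bijection between arrows of $Q$ and isomorphism classes of almost split sequences of type (2.1): distinct arrows yield non-isomorphic middle terms because their supports differ, and every type-(2.1) sequence arises from some arrow by part (1). Since the underlying Dynkin diagram $\mathbb{A}_n$ has exactly $n-1$ edges, the quiver $Q$ has $n-1$ arrows regardless of orientation, giving the count $n-1$.

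The main obstacle is the combinatorial step in part (1): making precise why, among all strings over a type-$\mathbb{A}_n$ quiver, only the single-arrow strings have the property that exactly one of the two endpoint operations (hook addition / cohook deletion) is non-trivial. Once this is pinned down through a careful check of each endpoint configuration (source vs.\ sink of $Q$, interior vs.\ boundary vertex), the identification of $U(\beta), N(\beta), V(\beta)$ and the counting in part (2) follow routinely.
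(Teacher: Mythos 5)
Your proposal takes a genuinely different route from the paper, and in doing so it opens two gaps. The paper proves part (1) in one line by citing the Corollary on p.~174 of Butler--Ringel [6], which already asserts, for an arbitrary string algebra, that the almost split sequences with indecomposable middle term are exactly the sequences $0\to M(U_\beta)\to M(N_\beta)\to M(V_\beta)\to 0$ indexed by the arrows $\beta$; part (2) is then just the count of arrows. You instead set out to rederive this classification from the hook/cohook description of the middle term, but the decisive combinatorial step --- that exactly one of the two endpoint summands vanishes only in the situation attached to a single arrow --- is the whole content of the cited corollary, and you explicitly leave it as an ``obstacle'' still to be ``pinned down''. As it stands this is a proof outline, not a proof. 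Your phrasing also conflates the arrow $\beta$ with the string $N_\beta=U_\beta\beta V_\beta$: the middle term is $M(N_\beta)$, and $N_\beta$ is in general strictly longer than $\beta$ (see below), so ``the string module attached to $\beta$'' must be understood via this construction, not as $M(\beta)$.

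The second gap is an actual error in part (2). You claim the assignment $\beta\mapsto\bigl(U(\beta),N(\beta),V(\beta)\bigr)$ is injective ``because distinct arrows yield non-isomorphic middle terms, their supports differing''. This is false: for $Q\colon 1\xrightarrow{\alpha_1}2\xleftarrow{\alpha_2}3$ one has $N_{\alpha_1}=\alpha_2^{-1}\alpha_1$ and $N_{\alpha_2}=\alpha_1^{-1}\alpha_2$, mutually inverse strings, so the two type-(2.1) sequences $0\to P(3)\to I(2)\to S(1)\to 0$ and $0\to P(1)\to I(2)\to S(3)\to 0$ have the same middle term $I(2)=[1,3]$ with the same support. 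The count $n-1$ survives, but injectivity has to be read off the end terms: an almost split sequence is determined up to isomorphism by its right-hand term, and $\beta\mapsto M(V_\beta)$ is injective ($\beta$ is the unique direct letter of $N_\beta$, and the only ambiguity $N_\beta=N_{\beta'}^{-1}$ occurs exactly in the length-two configuration above, where the modules $M(V_\beta)$ and $M(V_{\beta'})$ already differ). With that repair, and since a quiver of type $\mathbb{A}_n$ has exactly $n-1$ arrows, part (2) follows.
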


\begin{proof}
The assertion (1) is a special case of [6, p.174, Corollary], and the assertion (2) is an immediate consequence of (1).
\end{proof}

We recall some notions about string algebras from [5, 6].

Let $\Lambda=KQ/I$ with $I$ an admissible ideal of $KQ$ and let $\beta$ be an arrow of $Q$. We denote by
$\beta^{-1}$ a {\bf formal inverse} of $\beta$
with $s(\beta^{-1})=e(\beta)$ and $e(\beta^{-1})=s(\beta)$, and write $(\beta^{-1})^{-1}=\beta$. We form `paths'
$c_1 \cdots c_n$ of length $n \geq 1$ where all the $c_i$ are of the form $\beta$ or $\beta^{-1}$ and $s(c_i)=e(c_{i+1})$.
Define $(c_1 \cdots c_n)^{-1}=c_n^{-1} \cdots c_1^{-1}$ and $s(c_1 \cdots c_n)=s(c_n)$, $e(c_1 \cdots c_n)=e(c_1)$.
A path $c_1 \cdots c_n$ of length $n \geq 1$ is called a {\bf string} if $c_{i+1} \neq c_i^{-1}$ for any $1 \leq i \leq n-1$,
and neither subpath $c_i c_{i+1} \cdots c_{i+t}$ nor its inverse belongs to the ideal $I$. Also the two strings of length 0
is defined just as the trivial path $\epsilon_i$ at each vertex $i$ and its inverse.
For example, for the quiver
$$\xymatrix{1 \ar[r]^{\alpha _1} &2 \ar[r]^{\alpha _2} &3  &4\ar[l]_{\alpha _3}}$$
with $\alpha_2 \alpha_1=0$, all the strings are $\epsilon_1$, $\epsilon_2$, $\epsilon_3$, $\epsilon_4$, $\alpha _1$, $\alpha _2$,
$\alpha _3$, $\alpha _2 \alpha _1$, $\alpha _3^{-1} \alpha _2$ and all their inverses.

Let $\mathcal{S}$ be the set of all strings. We say that a string $\omega$ {\bf starts (resp. ends) on a peak} if there exists no arrow
$\alpha$ such that $\omega \alpha \in \mathcal{S}$ (resp. $\alpha ^{-1} \omega \in \mathcal{S}$); similarly, a string
{\bf starts (resp. ends) in a deep} if there exists no arrow $\beta$ such that $\omega \beta^{-1} \in \mathcal{S}$
(resp. $\beta \omega \in \mathcal{S}$).
A string $\omega=\alpha_1 \cdots \alpha_n$ is called {\bf direct} if all $\alpha_i$ are arrows, and called {\bf inverse}
if its inverse is direct. Strings of length 0 are both direct and inverse. For example, in the linearly oriented quiver
of type $\mathbb{A}_n$, all the arrows end on a peak and start in a deep since all the strings here are either inverse or direct.
Dually, all the inverses of arrows start on a peak and end in a deep.

For every arrow $\alpha$ in $Q$, let $N_{\alpha}=U_{\alpha} \alpha V_{\alpha}$ be the unique string with $U_{\alpha}$ and $V_{\alpha}$
both inverse and $N_{\alpha}$ starts in a deep and ends on a peak. By [5, Remarks 3.2(1)], there exists an almost split sequence
with an indecomposable middle term:
$$\xymatrix{
0 \ar[r] &M(U_{\alpha}) \ar[r] &M(N_{\alpha}) \ar[r] &M(V_{\alpha}) \ar[r] &0
}$$
for every arrow $\alpha$, and all almost split sequences of this type are constructed in this way.
Here $M(\omega)$ denotes the indecomposable module corresponding to the string $\omega$. Note that $M(\omega)$ and
$M(\omega^{-1})$ are always isomorphic.

Let $\Lambda$ be an algebra of type $\mathbb{A}_n$. We denote all the $n-1$ almost split sequences of type (2.1) by
$$\xymatrix{
0 \ar[r] &M(U_{\alpha_i}) \ar[r] &M(N_{\alpha_i}) \ar[r] &M(V_{\alpha_i}) \ar[r] &0
}$$
with $1 \leq i \leq n-1$. It is known from [12, Section 8.3.1] that all indecomposable representations of $Q$ can be expressed
in the form $[a,b]$ with $1 \leq a \leq b \leq n$, which means as a representation
by putting 1-dimensional vector spaces between $a$ and $b$ (including $a$ and $b$)
and zero vector spaces in other positions with linear maps zero or identity:
$$0 \mbox{-----} 0 \mbox{-----}\cdots \mbox{-----} 0 \mbox{-----} K \mbox{-----} K \mbox{-----} \cdots \mbox{-----}
K \mbox{-----} K \mbox{-----} 0 \mbox{-----} \cdots \mbox{-----} 0.$$
Thus, for any string $\omega$, $M(\omega)=[s(\omega),e(\omega)]$ or $[e(\omega),s(\omega)]$.

\section{Main results}

In this section, let $Q$ be of type $\mathbb{A}_n$ and $\Lambda=KQ$ or $\Lambda=KQ/I$
with $I$ an admissible ideal of $KQ$. All morphisms considered are irreducible morphisms between
indecomposable modules in $\mod \Lambda$.

We begin with the following

\begin{proposition}
\begin{enumerate}
\item[]
\item[(1)]
\ \ \ $\{\Coker f\mid f$ is monic$\}$\\
$=\{\Coker f\mid$ for any monomorphism $f:0 \rightarrow X \rightarrow P(i)\}$\\
$=\{M(V_{\alpha_i})\}_{i=1}^n$.
\item[(2)]
\ \ \ $\{\Ker g\mid g$ is epic$\}$\\
$=\{\Ker g\mid$ for any epimorphism $g:I(i) \rightarrow Y \rightarrow 0\}$\\
$=\{M(U_{\alpha_i})\}_{i=1}^n$.
\end{enumerate}
\end{proposition}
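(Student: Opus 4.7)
The plan is to prove (1); (2) then follows by applying the $K$-duality $D=\Hom_K(-,K)$, which interchanges monomorphisms with epimorphisms, projectives with injectives, and swaps the roles of $M(U_\alpha)$ and $M(V_\alpha)$ in type-(2.1) almost split sequences. For (1), any monomorphism into $P(i)$ is a fortiori a monomorphism between indecomposables, so the middle set is trivially contained in the first. It thus suffices to establish the two containments
$\{M(V_{\alpha_i})\}\subseteq\{\Coker f\mid f:X\to P(i)\text{ is an irreducible mono}\}$
and
$\{\Coker f\mid f\text{ irreducible mono}\}\subseteq\{M(V_{\alpha_i})\}$.

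For the first containment, I would exhibit, for each arrow $\alpha$ in $Q$, an explicit irreducible monomorphism into an indecomposable projective whose cokernel is $M(V_\alpha)$. In the path-algebra case this is direct: the radical decomposes as $\rad P(i)=\bigoplus_{\alpha:i\to j}P(j)$, so each arrow $\alpha:i\to j$ produces an irreducible component inclusion $P(j)\hookrightarrow P(i)$, and a string-module computation exploiting the defining property of $V_\alpha$ (that $\alpha V_\alpha$ starts in a deep) identifies the cokernel $P(i)/P(j)$ with $M(V_\alpha)$. The bound-algebra case follows by the same recipe after passing to the quotient by the admissible ideal $I$.

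For the second containment, by Lemma 2.8 every almost split sequence in $\mod\Lambda$ is of type (2.1) or type (2.2). If $f$ appears in a type-(2.1) sequence $0\to M(U_\alpha)\to M(N_\alpha)\to M(V_\alpha)\to 0$, then $f$ is forced to be the left map (the right map is epic), so $\Coker f=M(V_\alpha)$. If $f$ appears in a type-(2.2) sequence $0\to L\to M_1\oplus M_2\to N\to 0$, then by Lemma 2.7 the monic irreducible morphisms form a diagonal pair $(p_1,p_2)$, and a short snake-lemma argument (pushing $L$ through the projection to the relevant summand) gives $\Coker p_1\cong\Coker p_2$.

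The main obstacle is identifying this common cokernel in the type-(2.2) case with some $M(V_\alpha)$. My plan is to use the explicit classification of type-(2.2) sequences in a string algebra of type $\mathbb{A}_n$: each such sequence has middle terms $M_1, M_2$ obtained from $L$ by a single string operation on the two ends, and the monic pair consists precisely of the two ``end-extension'' inclusions; their common cokernel is therefore a string module of the shape $M(V_\alpha)$, where $\alpha$ is the unique arrow governing that end-extension. Combined with the first containment, this closes the argument.
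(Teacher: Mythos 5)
Your strategy is viable and, at the decisive step, rests on exactly the same ingredient as the paper: the Butler--Ringel string combinatorics. The ``main obstacle'' you flag (identifying the common cokernel in the type-(2.2) case with some $M(V_\alpha)$) is resolved in the paper by precisely the computation you sketch, namely $\Coker\bigl(M(\omega)\hookrightarrow M(\omega_h)\bigr)\cong M(\beta_1^{-1}\cdots\beta_r^{-1})\cong M(V_{\beta_0})$ for the hook extension governed by the arrow $\beta_0$, and dually for ${}_h\omega$. The organizational difference is that the paper never routes the second containment through almost split sequences: it invokes the classification of \emph{irreducible maps} between string modules ([6, p.166 and p.168, Lemmas; p.172, Proposition]), by which every irreducible monomorphism between indecomposables is a canonical embedding $M(\omega)\to M(\omega_h)$ or $M(\omega)\to M({}_h\omega)$, and computes the cokernel once and for all; it then quotes [6, p.174--175] for the equality of the second and third sets. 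Your detour through AR sequences buys the nice pushout observation that the two monic components of a type-(2.2) sequence have isomorphic cokernels, but the paper gets the identification of each cokernel individually anyway, so nothing is saved.

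There are two genuine soft spots to repair. First, an irreducible monomorphism $f:X\to P(i)$ with projective target is \emph{not} a component of any almost split sequence (it is only a component of the inclusion $\rad P(i)\to P(i)$), so your dichotomy ``$f$ appears in a type-(2.1) or a type-(2.2) sequence'' does not cover all irreducible monomorphisms; you must add this third case explicitly to the second containment (your first-containment computation supplies the needed cokernel identification, but it is not invoked there). Second, the decomposition $\rad P(i)=\bigoplus_{\alpha:i\to j}P(j)$ is special to the hereditary case; for a bound quiver algebra the radical summands are uniserial string modules that need not be projective, and ``the same recipe after passing to the quotient'' is exactly where the string computation with $V_\alpha$ (that $N_\alpha=U_\alpha\alpha V_\alpha$ starts in a deep and ends on a peak) must actually be carried out. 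Your reduction of (2) to (1) by $K$-duality is exactly what the paper does.
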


\begin{proof}
By [6, p.174--175], we have
$\{\Coker f\mid$ for any monomorphism $f: 0 \rightarrow X \rightarrow P(i)\}=\{M(V_{\alpha_i})\}_{i=1}^n$.

Because $\Lambda$ is of type $\mathbb{A}_n$, there exist $n-1$ irreducible morphisms in the form
$f_i: 0 \rightarrow X \rightarrow P(i)$ with $1 \leq i \leq n-1$, and the $n-1$ almost split sequences
of type (2.1) are as follows.
$$\xymatrix{
0 \ar[r] &\tau \Coker f_i \ar[r] & M_i \ar[r] & \Coker f_i \ar[r] & 0.
}$$

Let $\omega$ be a string not starting on a peak. Let $\beta_0$, $\beta_1$, $\cdots$, $\beta_n$ be arrows such that
$\omega_h=\omega \beta_0 \beta_1^{-1} \cdots \beta_r^{-1}$ is a string starting in a deep.
Then the canonical embedding $f: M(\omega) \rightarrow M(\omega_h)$ is irreducible ([6, p.166, Lemma]).
Thus $\Coker f= M(\omega_h)/M(\omega)\cong M(\beta_1^{-1} \cdots \beta_r^{-1})\cong M(V_{\beta_0})$.

Let $\omega$ be a string not ending on a peak.
Let $\beta_0$, $\beta_1$, $\cdots$, $\beta_n$ be arrows such that
${}_h\omega= \beta_r \cdots \beta_1  \beta_0^{-1} \omega$ is a string ending in a deep. Then the canonical embedding
$f: M(\omega) \rightarrow M({}_h \omega)$ is irreducible ([6, p.168, Lemma]). Thus $\Coker f= M({}_h\omega)/M(\omega)
\cong$ $M(\beta_r \cdots \beta_1)\cong M(\beta_1^{-1} \cdots \beta_r^{-1})\cong M(V_{\beta_0})$.

Then by [6, p.172, Proposition], we have $\{\Coker f\mid f$ is monic$\}=\{M(V_{\alpha})\}_{i=1}^n$.

(2) It is dual to (1).
\end{proof}

\begin{corollary}
Let $f$ be monic. Then $\Soc(\Coker f)$ is simple
and $C(f)=P(i)$ for some $1\leq i\leq n$.
\end{corollary}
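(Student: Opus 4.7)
The plan is to combine Theorem 2.4(2) with Proposition 2.9(1). Since $f$ is monic, $\Ker f = 0$, so Theorem 2.4(2) reduces $C(f)$ to $\bigoplus_k P(i_k)$, the direct sum (over pairwise non-isomorphic indices) of indecomposable projectives almost factoring through $f$. Proposition 2.9(1) writes $\Coker f = M(V_{\alpha_j})$ for some arrow $\alpha_j$ of $Q$.

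I would first show that $\Soc(\Coker f)$ is simple. Writing $V_{\alpha_j} = \gamma_1^{-1} \cdots \gamma_r^{-1}$ as an inverse string of arrows, the composability condition $e(\gamma_i) = s(\gamma_{i+1})$ forces the $\gamma_i$ to form a directed path in $\mathbb{A}_n$. Hence the support of $M(V_{\alpha_j})$ is an interval every arrow of which points the same way, and so has a unique sink; the socle of such an interval module is the simple at that sink. The degenerate case $r = 0$ makes $\Coker f$ simple on the nose.

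Next, I would characterize when an indecomposable projective $P(k)$ almost factors through $f$. Given any $h : P(k) \to N$ as in Definition 2.3, the requirement $h(\rad P(k)) \subseteq \Im f$ together with $\Im h \not\subseteq \Im f$ is equivalent to saying that the induced map $\overline{h} : S(k) \to \Coker f$ is nonzero, which, since $S(k)$ is simple, amounts to $S(k) \hookrightarrow \Soc(\Coker f)$. Conversely, any such embedding extends along the natural surjection $P(k) \to S(k)$ and lifts through $N \to \Coker f$ by projectivity of $P(k)$ to an $h$ witnessing almost factorization in the sense of Definition 2.3. Combined with the socle computation of the previous step, exactly one $P(k)$, namely $P(i)$ with $S(i) = \Soc(\Coker f)$, almost factors through $f$, so $C(f) = P(i)$.

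The main obstacle is the socle computation: it requires unpacking the string-algebra conventions to see that an inverse string in $\mathbb{A}_n$ traces a directed path in $Q$ and hence produces an interval module with a unique sink in its support; the characterization step is then a routine application of projectivity and the definition of almost factoring.
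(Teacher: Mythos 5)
Your proof is correct, and it reaches the conclusion by a route that is close to, but not identical with, the paper's. Both arguments pivot on the same first step: Proposition 3.1(1) (your ``Proposition 2.9(1)'') identifies $\Coker f$ with $M(V_{\alpha_j})$ for an inverse string, whence the socle is simple --- the paper cites uniseriality of $M(V_{\alpha_j})$ from the string-algebra literature, while you unwind the string conventions to see an interval module over a directed path with a unique sink; these are the same observation. The divergence is in the second half. The paper invokes Ringel's determiner formula [10, Theorem 1], which says outright that $f$ is right determined by $\tau^{-1}\Ker f\oplus P(\Soc(\Coker f))$, and then extracts $C(f)=P(i)$ from indecomposability. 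You instead start from Theorem 2.4(2), reducing $C(f)$ for monic $f$ to the sum of projectives almost factoring through $f$, and then prove from Definition 2.3 that $P(k)$ almost factors through a monomorphism $f$ exactly when $S(k)$ embeds in $\Soc(\Coker f)$ (the induced map $\overline{h}\colon S(k)\to\Coker f$ is nonzero in one direction; projectivity of $P(k)$ plus injectivity of $f$ gives the lift in the other). In effect you re-derive, in this special case, the equivalence between the two forms of Ringel's formula that the paper takes as given. Your version is longer but self-contained and makes visible exactly why the projective cover of the socle of the cokernel appears; the paper's is shorter by citation. I see no gap: the lifting of $h|_{\rad P(k)}$ through $M$ does use that $f$ is monic, and you use it, and the case $\Coker f$ simple ($r=0$) is handled.
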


\begin{proof} By Proposition 3.1(1), there exists a unique arrow $\alpha_i$ such that $\Coker f=M(V_{\alpha_i})$.
Here the string $V_{\alpha_i}$ is inverse and by [5, p.169], $M(V_{\alpha_i})$ is uniserial and admits a simple socle.
So we may assume $\Soc(\Coker f)=S(i)$ for some $1\leq i\leq n$.
By [10, Theorem 1], $f$ is right $\tau^{-1} \Ker f \oplus P(S(i))$-determined, where $P(S(i))(=P(i))$ is
the projective cover of $S(i)$. Because $f$ is monic, $f$ is right $P(i)$-determined and $C(f)=P(i)$.
\end{proof}

\begin{remark}
If $f$ is epic, then $C(f)=\tau^{-1}\Ker f$ by Theorem 2.4(2); if $f$ is monic,
then $C(f)=P(i)$ for some $1\leq i\leq n$ by Corollary 3.2. Thus $C(f)$ is always indecomposable.
\end{remark}

\begin{corollary}
\begin{enumerate}
\item[]
\item[(1)]
Let $f_1$ and $f_2$ be monic. Then
$C(f_1)=C(f_2)=P(i)$ if and only if $\Coker f_1$ and $\Coker f_2$ share the same socle $S(i)$.
\item[(2)]
Let $g_1$ and $g_2$ be epic. Then
$C(g_1)=C(g_2)$ if and only if $\Ker g_1 = \Ker g_2$.
\end{enumerate}
\end{corollary}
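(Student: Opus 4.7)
The plan is to deduce both parts directly from results already established in this section, namely Corollary 3.2 for part (1) and Remark 3.3 (together with Theorem 2.4(2)) for part (2). Both directions of each biconditional will reduce to an equality of indecomposable modules identified explicitly by those results, so no additional machinery is required.

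For part (1), I would first recall from the proof of Corollary 3.2 that whenever $f$ is a monic irreducible morphism between indecomposables, the cokernel $\Coker f$ equals some $M(V_{\alpha_i})$, is uniserial, and therefore admits a simple socle $S(i)$; moreover $C(f)=P(i)$ is the projective cover of this socle. The backward implication is then immediate: if $\Soc(\Coker f_1)=\Soc(\Coker f_2)=S(i)$, then applying Corollary 3.2 to each $f_j$ separately yields $C(f_1)=P(i)=C(f_2)$. For the forward implication, I would assume $C(f_1)=C(f_2)=P(i)$; Corollary 3.2 gives $C(f_j)=P(\Soc(\Coker f_j))$ for $j=1,2$, and since distinct simple modules have non-isomorphic projective covers, the equality of the $P(i)$'s forces $\Soc(\Coker f_1)=S(i)=\Soc(\Coker f_2)$.

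For part (2), I would invoke Remark 3.3 to identify $C(g_j)=\tau^{-1}\Ker g_j$ for each epic irreducible $g_j$. The backward implication is tautological, since $\Ker g_1=\Ker g_2$ yields $\tau^{-1}\Ker g_1=\tau^{-1}\Ker g_2$. For the forward implication, the assumption $C(g_1)=C(g_2)$ translates to $\tau^{-1}\Ker g_1=\tau^{-1}\Ker g_2$, and I would apply $\tau$ to both sides to recover $\Ker g_1=\Ker g_2$. This step is legitimate because Proposition 3.1(2) guarantees that each $\Ker g_j$ is an indecomposable module of the form $M(U_{\alpha_i})$, and non-injectivity of $\Ker g_j$ is forced by the non-vanishing of $\tau^{-1}\Ker g_j=C(g_j)$.

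The only real subtlety, and hence the main (though modest) obstacle, arises in part (2): one must verify that $\tau$ may be applied as an inverse to $\tau^{-1}$ on each $\Ker g_j$, which hinges on indecomposability (supplied by Proposition 3.1(2)) and non-injectivity (supplied by $C(g_j)\neq 0$ together with Remark 3.3). Apart from this short bookkeeping, both parts will fall out as near-immediate consequences of the corollary and remark preceding them.
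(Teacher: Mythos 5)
Your proposal is correct and follows essentially the same route as the paper: part (1) is read off from Corollary 3.2 (the minimal right determiner of a monic irreducible morphism is the projective cover of the simple socle of its cokernel), and part (2) reduces to the fact that $\tau^{-1}$ is injective on non-zero non-injective indecomposables, which the paper cites as [1, Proposition IV.2.10] and you justify directly via Proposition 3.1(2) and the non-vanishing of $C(g_j)$.
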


\begin{proof}
(1) It follows from Corollary 3.2.

(2) Note that $\Ker g_1$ and $\Ker g_2$ are non-zero non-injective indecomposable modules. By [1, Proposition IV.2.10],
we have that $\tau^{-1} \Ker g_1 = \tau^{-1} \Ker g_2$ if and only if $\Ker g_1 = \Ker g_2$.
\end{proof}

Note that an irreducible morphism is either a proper monomorphism or a proper epimorphism.
We have the following descriptions of irreducible morphisms from the viewpoint of minimal right determiners.
It is crucial in the sequel.

\begin{theorem}
\begin{enumerate}
\item[]
\item[(1)]
For any morphism $f$ in $\mod \Lambda$, the following statements are equivalent.
\begin{enumerate}
\item[(1.1)] $f$ is monic.
\item[(1.2)] There exists a monomorphism $f_1:X \rightarrow P(i)$ with $X$ indecomposable
such that $C(f)=C(f_1)=P(j)$ for some $1\leq j \leq n$.
\item[(1.3)] There exists an almost split sequence
$$0 \to M(U_{\alpha_i}) \buildrel {f_2}\over \longrightarrow  M(N_{\alpha_i}) \to M(V_{\alpha_i}) \to 0$$
in $\mod \Lambda$ such that $C(f)=C(f_2)=P(j)$ for some $1\leq j \leq n$.
\end{enumerate}
\end{enumerate}
\begin{enumerate}
\item[(2)]
For any morphism $g$ in $\mod \Lambda$, the following statements are equivalent.
\begin{enumerate}
\item[(2.1)] $g$ is epic.
\item[(2.2)] There exists a unique epimorphism $g_1:I(i) \rightarrow Y$ with $Y$ indecomposable such that $C(g)=C(g_1)$.
\item[(2.3)] There exists a unique almost split sequence
$$0 \to M(U_{\alpha_i}) \to M(N_{\alpha_i}) \buildrel {g_2}\over \longrightarrow M(V_{\alpha_i}) \to 0$$
in $\mod \Lambda$ such that $C(g)=C(g_2)=\tau^{-1}\Ker g_2 =M(V_{\alpha_i})$, which is
a non-zero non-projective indecomposable module.
\end{enumerate}
\end{enumerate}
\end{theorem}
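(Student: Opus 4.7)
The plan is to prove each of (1) and (2) by a short cyclic chain of implications, invoking the machinery already assembled: Proposition 3.1 identifies the cokernels of irreducible monomorphisms (resp.\ kernels of irreducible epimorphisms) between indecomposables as the string modules $M(V_{\alpha_i})$ (resp.\ $M(U_{\alpha_i})$); Corollary 3.2 and Corollary 3.4 encode that the minimal right determiner of a monic is the projective cover of the simple socle of its cokernel, and that equivalent monics/epics are detected by equal cokernel socles or equal kernels; Lemma 2.9 parametrises the type-(2.1) almost split sequences by the arrows of $Q$; and Remark 3.3 supplies the crucial dichotomy that $C(f)$ is projective precisely when $f$ is monic.

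For part (1), I would run the cycle (1.1) $\Rightarrow$ (1.2) $\Rightarrow$ (1.3) $\Rightarrow$ (1.1). Starting from $f$ monic, Proposition 3.1(1) produces an arrow $\alpha_i$ with $\Coker f \cong M(V_{\alpha_i})$ together with an irreducible monomorphism $f_1\colon X \to P(i)$ sharing this cokernel; the common simple socle $S(j)$ (as in the proof of Corollary 3.2) then gives $C(f)=C(f_1)=P(j)$, which is (1.2). For (1.2) $\Rightarrow$ (1.3), Lemma 2.9(1) supplies the type-(2.1) almost split sequence ending at $M(V_{\alpha_i})$, whose left map $f_2$ is a monomorphism with $\Coker f_2 \cong M(V_{\alpha_i})$, so Corollary 3.4(1) gives $C(f_2)=C(f_1)=P(j)$. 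Finally (1.3) $\Rightarrow$ (1.1) is Remark 3.3: if $f$ were epic, $C(f)=\tau^{-1}\Ker f$ would be non-projective, contradicting $C(f)=P(j)$.

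Part (2) is the dual cycle (2.1) $\Rightarrow$ (2.3) $\Rightarrow$ (2.2) $\Rightarrow$ (2.1). Remark 3.3 and Proposition 3.1(2) pin down the arrow $\alpha_i$ via $\Ker g \cong M(U_{\alpha_i})$, whence $C(g)=\tau^{-1}M(U_{\alpha_i})=M(V_{\alpha_i})$, and the associated type-(2.1) sequence has epic right map $g_2$ with matching kernel, so Corollary 3.4(2) yields (2.3). Proposition 3.1(2) then furnishes the required $g_1\colon I(i)\twoheadrightarrow Y$ with $\Ker g_1 \cong M(U_{\alpha_i})$ for (2.2), and (2.2) $\Rightarrow$ (2.1) is Remark 3.3 in reverse: a non-projective determiner cannot come from a monic. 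The step I expect to demand the most care is verifying the \emph{uniqueness} clauses in (2.2) and (2.3), which reduce to injectivity of $\alpha \mapsto M(U_\alpha)$ on arrows of $Q$; this is a combinatorial check in the type $\mathbb{A}_n$ string calculus, since $U_\alpha$ is pinned down as the unique inverse string ending in a deep whose final arrow is $\alpha$. All remaining steps are formal bookkeeping with the cited results.
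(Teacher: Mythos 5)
Your proposal is correct and follows essentially the same route as the paper: a cyclic chain of implications in each part, driven by Proposition 3.1 (matching cokernels/kernels to the string modules $M(V_{\alpha_i})$, $M(U_{\alpha_i})$), Corollary 3.4 together with Lemma 2.5 to transfer minimal right determiners, and the projective/non-projective dichotomy of Remark 3.3 to close each cycle. The only cosmetic difference is the order of the cycle in part (1) (the paper does $(1.1)\Rightarrow(1.3)\Rightarrow(1.2)\Rightarrow(1.1)$), and for the uniqueness in (2.2) the paper argues via $\dim\Hom_{\Lambda}(I(i),I(j))\leq 1$ rather than your string-combinatorial check, but these are equivalent in substance.
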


\begin{proof}
(1) By [10, Section 3], $C(f)$ exists and is non-zero.

$(1.1)\Rightarrow (1.3)$  By Proposition 3.1(1), there exists an almost split sequence
$$0 \to M(U_{\alpha_i}) \buildrel {f_2}\over \longrightarrow  M(N_{\alpha_i}) \to M(V_{\alpha_i}) \to 0$$
in $\mod \Lambda$ such that $\Coker f = \Coker f_2$. Then by Corollary 3.4(1), we have $C(f)=C(f_2)=P(j)$
for some $1\leq j \leq n$.

$(1.3)\Rightarrow (1.2)$ By Proposition 3.1(1), there exists a monomorphism $f_1:X \rightarrow P(i)$ with $X$ indecomposable
such that $\Coker f_2=\Coker f_1$. Then by Corollary 3.4(1), $C(f_2)=C(f_1)=P(j)$
for some $1\leq j \leq n$.

$(1.2)\Rightarrow (1.1)$ If $C(f)=P(j)$, then $f$ is not epic, and so it is monic.

(2) By [10, Section 3], $C(g)$ exists and is non-zero.

$(2.1)\Rightarrow (2.3)$
By Proposition 3.1(2), there exists an almost split sequence
$$0 \to M(U_{\alpha_i}) \to M(N_{\alpha_i}) \buildrel {g_2}\over \longrightarrow M(V_{\alpha_i}) \to 0$$
in $\mod \Lambda$ such that $\Ker g = \Ker g_2$. Then by Corollary 3.4(2) and Lemma 2.5, we have $C(g)=C(g_2)=M(V_{\alpha_i})$.
The uniqueness also follows from Corollary 3.4(2) and Lemma 2.5.

$(2.3)\Rightarrow (2.2)$
By Proposition 3.1(2), there exists an epimorphism $g_1:I(i) \rightarrow Y$ with $Y$ indecomposable
such that $\Ker g_1=\Ker g_2$. Then by Corollary 3.4(2), we have $C(g_1)=C(g_2)$.

If there exists another epimorphism $g_0: I(j) \rightarrow Z$ with $Z$ indecomposable and $j\neq i$
such that $C(g)=C(g_0)$, then $\Ker g_1=\Ker g_0$ by Corollary 3.4(2), and so there exists $h_1:I(i)
\rightarrow I(j)$ and $h_2:I(j) \rightarrow I(i)$ such that $f_2=h_1 i_1$ and $f_1=h_2 i_2$.
It implies that both $h_1$ and $h_2$ are non-zero, and hence $\dim \Hom_{\Lambda}(I(i), I(j))=1$ and
$\dim \Hom_{\Lambda}(I(j), I(i))=1$ by [12, Section 3.1.4.1].
It is a contradiction.

$(2.2)\Rightarrow (2.1)$ If $C(g)$ is not projective, then $\Ker g\neq 0$ and $g$ is epic.
\end{proof}

In the above theorem, unlike (2.2) and (2.3), neither (1.2) nor (1.3) possesses the uniqueness as shown in following example.
\begin{example}
Let $Q$ be the quiver
$$1 \leftarrow 2\rightarrow 3 \leftarrow 4\rightarrow  5 \leftarrow 6$$ and $\Lambda=KQ$. Then the Auslander-Reiten quiver of $\mod \Lambda$
is as follows.
$$\xymatrix@-12pt{
\makebox[20pt]{$P(1)$} \ar[dr]^{f} && \makebox[20pt]{$\circ$} \ar[dr] && \makebox[20pt]{$\circ$} \ar[dr] && \makebox[20pt]{$\circ$} \\
& \makebox[20pt]{$P(2)$} \ar[ur]\ar[dr] && \makebox[20pt]{$\circ$} \ar[ur]\ar[dr] && \makebox[20pt]{$\circ$} \ar[ur]\ar[dr] \\
\makebox[20pt]{$P(3)$}\ar[ur]\ar[dr] && \makebox[20pt]{$\circ$} \ar[ur]\ar[dr] && \makebox[20pt]{$\circ$} \ar[ur]\ar[dr] && \makebox[20pt]{$\circ$} \\
& \makebox[20pt]{$P(4)$} \ar[ur]\ar[dr] && \makebox[20pt]{$\circ$} \ar[ur]\ar[dr] && \makebox[20pt]{$\circ$} \ar[ur]\ar[dr] \\
\makebox[20pt]{$P(5)$}\ar[ur]^{g}\ar[dr] && \makebox[20pt]{$\circ$} \ar[ur]\ar[dr] && \makebox[20pt]{$\circ$} \ar[ur]\ar[dr] && \makebox[20pt]{$\circ$} \\
&\makebox[20pt]{$P(6)$} \ar[ur]^{h} && \makebox[20pt]{$\circ$} \ar[ur] && \makebox[20pt]{$\circ$} \ar[ur]
}$$
and we have $C(f)=C(g)=C(h)=P(3)$.
\end{example}

\newpage
By Theorem 3.5(2), we immediately get the following

\begin{corollary}
We have
$$\{C(f)\mid f\ \text{is an epic irreducible morphism in}\ \mod \Lambda\}=\{M(V_{\alpha_i})\mid 1\leq i \leq n-1\}.$$
\end{corollary}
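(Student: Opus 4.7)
The plan is to establish the equality by two inclusions, both of which are essentially bookkeeping consequences of Theorem 3.5(2). For the inclusion $\subseteq$: given an epic irreducible morphism $f$ between indecomposable modules, the implication $(2.1)\Rightarrow(2.3)$ of Theorem 3.5 produces an arrow $\alpha_i$ of $Q$ and an almost split sequence of type (2.1) with $C(f)=M(V_{\alpha_i})$. Lemma 2.8 tells us that the almost split sequences of type (2.1) are indexed precisely by the $n-1$ arrows $\alpha_1,\dots,\alpha_{n-1}$, so indeed $C(f)\in\{M(V_{\alpha_i})\mid 1\leq i\leq n-1\}$.

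For the reverse inclusion $\supseteq$, I would fix $i$ with $1\leq i\leq n-1$ and exhibit a specific epic irreducible morphism realizing $M(V_{\alpha_i})$ as its minimal right determiner. The natural candidate is the right-hand map $g_i$ in the almost split sequence
$$0\to M(U_{\alpha_i})\to M(N_{\alpha_i}) \buildrel {g_i}\over\longrightarrow M(V_{\alpha_i})\to 0$$
of type (2.1) associated to $\alpha_i$. Because the middle term $M(N_{\alpha_i})$ is indecomposable, $g_i$ is a single irreducible morphism between indecomposable modules, and it is epic by short-exactness. Since $g_i$ is a minimal right almost split morphism ending at the indecomposable $M(V_{\alpha_i})$, Lemma 2.5 yields $C(g_i)=M(V_{\alpha_i})$; equivalently, Theorem 3.5(2) applied directly to $g_i$ gives the same conclusion.

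Since the corollary is essentially a rephrasing of Theorem 3.5(2), I do not anticipate any substantive obstacle. The only point worth checking explicitly is that the right-hand map of a type (2.1) almost split sequence is genuinely a single epic irreducible morphism, which is immediate from the indecomposability of the middle term together with the exactness of the sequence.
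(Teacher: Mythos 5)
Your proof is correct and follows the paper's route exactly: the paper derives this corollary immediately from Theorem 3.5(2), and your two inclusions (the forward one via $(2.1)\Rightarrow(2.3)$ together with Lemma 2.8, the reverse one by applying Lemma 2.5 to the right-hand map of each type (2.1) almost split sequence) are just a careful unpacking of that derivation. No gaps.
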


\begin{corollary}
$|\Det(\Lambda)| \leq 2n-2$.
\end{corollary}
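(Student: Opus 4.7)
The plan is to split the set $\Det(\Lambda)$ according to whether the minimal right determiners come from proper monomorphisms or from proper epimorphisms, which is exhaustive because every irreducible morphism between indecomposables is one or the other. Writing
\[
\Det(\Lambda) = \mathcal{M} \cup \mathcal{E},
\]
with $\mathcal{M} = \{C(f) \mid f \text{ monic irreducible}\}$ and $\mathcal{E} = \{C(g) \mid g \text{ epic irreducible}\}$, it then suffices to bound $|\mathcal{M}|$ and $|\mathcal{E}|$ separately and check that the two pieces are disjoint.

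First I would bound $|\mathcal{E}|$. By Corollary~3.7, $\mathcal{E}$ equals $\{M(V_{\alpha_i}) \mid 1\leq i\leq n-1\}$, hence $|\mathcal{E}| \leq n-1$ directly from the fact that $Q$ has only $n-1$ arrows. Next I would bound $|\mathcal{M}|$. By Corollary~3.2, every element of $\mathcal{M}$ has the form $P(j)$, and by Corollary~3.4(1) two such determiners $P(j)$ coincide if and only if the corresponding cokernels share the same simple socle. Invoking Proposition~3.1(1), the cokernel of any monic irreducible lies in the set $\{M(V_{\alpha_i})\}$ indexed by arrows of $Q$, and this set has at most $n-1$ elements. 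Since $M(V_{\alpha_i})$ is uniserial with a simple socle, the distinct socles arising this way number at most $n-1$, so $|\mathcal{M}| \leq n-1$ as well.

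Finally, the two sets are disjoint: by Theorem~3.5(2.3), every element of $\mathcal{E}$ is a non-projective indecomposable module, whereas every element of $\mathcal{M}$ is indecomposable projective. Combining,
\[
|\Det(\Lambda)| = |\mathcal{M}| + |\mathcal{E}| \leq (n-1) + (n-1) = 2n-2.
\]

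The argument is essentially bookkeeping on top of Proposition~3.1 and Theorem~3.5, so no step poses a genuine obstacle; the only point requiring a little care is the disjointness $\mathcal{M} \cap \mathcal{E} = \emptyset$, which is immediate from the explicit projective/non-projective dichotomy noted in Theorem~3.5.
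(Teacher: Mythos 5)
Your proof is correct and follows essentially the same route as the paper: the paper invokes Theorem~3.5 to reduce everything to the $2n-2$ morphisms occurring in the $n-1$ almost split sequences of type (2.1), which is exactly your count of at most $n-1$ determiners from monics plus at most $n-1$ from epics. (The disjointness of $\mathcal{M}$ and $\mathcal{E}$ is not actually needed for the upper bound, since $|\mathcal{M}\cup\mathcal{E}|\leq|\mathcal{M}|+|\mathcal{E}|$ holds in any case.)
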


\begin{proof}
By Lemma 2.8, there exist $n-1$ almost split sequences of type (2.1) in $\mod \Lambda$.
By Theorem 3.5, to calculate the minimal right determiners of all irreducible morphisms between indecomposable modules,
it suffices to calculate the minimal right determiners of the $2n-2$ morphisms in the $n-1$ almost split sequences of type (2.1).
The assertion follows.
\end{proof}

To get the general formula about $|\Det(\Lambda)|$, we need more preparations.

\begin{proposition}
If $i$ with $2 \leq i \leq  n-1$ is a source in $Q$, then $P(i)$ is not a minimal right determiner
of any irreducible morphism.
\end{proposition}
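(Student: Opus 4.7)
The plan is to argue by contradiction: suppose $P(i)=C(f)$ for some irreducible morphism $f$ between indecomposable modules. By Theorem 3.5(2.3), the minimal right determiner of an epic irreducible morphism is of the form $M(V_{\alpha_k})$ and is never projective, so $f$ must be monic. Corollary 3.2 then forces $\Soc(\Coker f)=S(i)$, and Proposition 3.1(1) identifies $\Coker f$ with $M(V_{\alpha_j})$ for some arrow $\alpha_j$ of $Q$. Thus it suffices to show that no $M(V_{\alpha_j})$ has socle $S(i)$.

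The crucial observation is that $S(i)$ is itself injective: since $i$ is a source in $Q$, no arrow of $Q$ ends at $i$, so the only path ending at $i$ is the trivial one, whence $\dim I(i)_j=\delta_{ij}$ and therefore $I(i)=S(i)$. Consequently the socle inclusion $S(i)\hookrightarrow M(V_{\alpha_j})$ exhibits $S(i)$ as an injective submodule of the indecomposable module $M(V_{\alpha_j})$, and such a submodule must split off as a direct summand, forcing $M(V_{\alpha_j})=S(i)$. Because any non-trivial string visits at least two distinct vertices of the type $\mathbb{A}_n$ quiver, the equality $M(V_{\alpha_j})=S(i)$ in turn forces the trivial string $V_{\alpha_j}=\epsilon_i$.

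The contradiction now comes from the ``starts in a deep'' condition defining $N_{\alpha_j}=U_{\alpha_j}\,\alpha_j\,\epsilon_i$. From $e(\epsilon_i)=i=s(\alpha_j)$ we see that $\alpha_j$ is one of the two arrows of $Q$ that start at the interior source $i$; let $\beta$ denote the other such arrow, which exists precisely because $2\le i\le n-1$. Then $\beta\ne\alpha_j$, and the extension $N_{\alpha_j}\beta^{-1}$ is a legitimate string: the composition condition $s(\beta)=i=s(N_{\alpha_j})$ holds, the no-backtrack condition reduces to $\alpha_j\ne\beta$, and admissibility of $I$ prevents a single inverse arrow from constituting a subpath (or an inverse subpath) in $I$. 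Hence $N_{\alpha_j}$ does not start in a deep, contradicting its defining property, and so $P(i)$ cannot be the minimal right determiner of any irreducible morphism. The main technical step is the injectivity reduction $I(i)=S(i)$; once that is in place, the remaining string-combinatorial argument relies only on the presence of a second arrow of $Q$ starting at $i$, which is exactly what $i$ being an interior source guarantees.
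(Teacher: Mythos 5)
Your proof is correct, but it takes a genuinely different route from the paper's. Both arguments dispose of the epic case immediately (for epic $f$ one has $C(f)=\tau^{-1}\Ker f$, which is non-projective), so the real content is the monic case. There the paper works inside the Auslander--Reiten quiver: it characterizes the interior sources by the decomposability of $\rad P(i)$, observes that the only irreducible morphisms $P(i)$ could almost factor through lie on the two maximal sectional paths emanating from $P(i)$, and then rules out each candidate by showing the required commutative square cannot exist because $\Hom_{\Lambda}(N_i,A)=0$ while $\Hom_{\Lambda}(N_i,P(i))$ and $\Hom_{\Lambda}(P(i),B)$ are non-zero. You instead stay entirely on the module/string side: via Corollary 3.2 and Proposition 3.1(1) the question reduces to whether some $M(V_{\alpha_j})$ can have socle $S(i)$, and your key observation that $I(i)=S(i)$ (no arrow of $Q$ ends at a source) collapses this to $M(V_{\alpha_j})=S(i)$, i.e.\ $V_{\alpha_j}=\epsilon_i$, which is then excluded by the ``starts in a deep'' condition because the second arrow out of the interior source $i$ extends $N_{\alpha_j}$ to a longer string. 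Your approach buys a cleaner conceptual explanation ($S(i)$ is injective at a source, hence can never be the socle of a larger indecomposable) and avoids the paper's somewhat informal ``it is easy to check'' step identifying the candidate morphisms; the paper's approach, in turn, is the template that carries over to the later sink analysis (Proposition 3.11 and Theorem 3.15), where the obstruction is a relation in $I$ rather than injectivity of a simple. The only point worth tightening in your write-up is the subpath check for $N_{\alpha_j}\beta^{-1}$: besides the single letter $\beta^{-1}$, one should also note that any subpath crossing the junction mixes a direct and an inverse letter and hence is neither a path nor an inverse path, so it cannot lie in the admissible ideal $I$; this is immediate, but it is part of verifying that $N_{\alpha_j}\beta^{-1}$ is a string.
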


\begin{proof}
It is trivial that $P(i)$ can not almost factor through any epic irreducible morphism.
In the following, we prove that $P(i)$ can not almost factor through any monic irreducible morphism.

We claim that $i$ with $2 \leq i \leq n-1$ is a source in $Q$ if and only if $\rad P(i)$ is non-zero
and not indecomposable. First we have that $P(i)=[s,t]$ with $s < i <t $ and $\rad P(i)=P(i)/S(i)=[s,i-1] \oplus [i+1,t]$
is not indecomposable. If $i$ is not a source, then $i$ can be described in three cases:
\begin{enumerate}
\item[(1)] $\cdots  i-1 \to  i \to i+1 \cdots$: $P(i)=[i,t]$ and $\rad P(i)=[i+1,t]$ is indecomposable.
\item[(2)] $\cdots i-1  \leftarrow i \leftarrow i+1 \cdots$: $P(i)=[s,i]$ and $\rad P(i)=[s,i-1]$ is indecomposable.\\
\item[(3)] $\cdots  i-1 \to i \leftarrow i+1 \cdots$: $P(i)=S(i)$ and $\rad P(i)=0$.
\end{enumerate}
The claim is proved.

Consider the maximal sectional paths with respect to $P(i)$:
$$\xymatrix@-15pt{
&&&&&&& \makebox[18pt]{$A_i$}          \\
&&&&&& \makebox[18pt]{$\cdots$} \ar[ur] \\
&&& \makebox[18pt]{$\circ$} \ar[dr]^{f_{i,s}} && \makebox[18pt]{$\circ$} \ar[ur]\\
&& \makebox[18pt]{$\cdots$} \ar[dr]\ar[ur] && \makebox[18pt]{$\circ$} \ar[ur] \\
& \makebox[18pt]{$\circ$} \ar[dr]\ar[ur] && \makebox[18pt]{$\cdots$} \ar[ur]\\
\makebox[18pt]{$M_i$}  \ar[dr]^{f_{i,1}}\ar[ur] && \makebox[18pt]{$\circ$} \ar[ur]\\
& \makebox[18pt]{$P(i)$} \ar[dr]\ar[ur] \\
\makebox[18pt]{$N_i$}  \ar[dr]\ar[ur]_{g_{i,1}} && \makebox[18pt]{$\circ$} \ar[dr]\\
& \makebox[18pt]{$\circ$} \ar[dr]\ar[ur] && \makebox[18pt]{$\cdots$} \ar[dr]\\
&& \makebox[18pt]{$\cdots$} \ar[dr]\ar[ur] && \makebox[18pt]{$\circ$} \ar[dr]\\
&&& \makebox[18pt]{$\circ$} \ar[ur]_{g_{i,t}} && \ \makebox[18pt]{$\circ$} \ar[dr] \\
&&&&&& \makebox[18pt]{$\cdots$} \ar[dr] \\
&&&&&&& \makebox[18pt]{$B_i$.}
}$$
Then by Lemmas 2.6 and 2.7, it is easy to check that the possible morphisms
that $P(i)$ might almost factor through are all the $f_{i,l} (1 \leq l \leq s)$ and $g_{i,m} (1 \leq m \leq t)$.
However, in the following we prove that $P(i)$ can not almost factor through any one of them.
Consider the following diagram:
$$\xymatrix{
M_i \oplus N_i \ar[d] \ar[rr]^{(f_{i,1},g_{i,1})}
& & P(i) \ar[d]  \\
A  \ar[rr]^{f_{i,l}}& & B.}
$$
Since the paths from $P(i)$ to $A_i$ and $B_i$ are maximal sectional paths, we have that
both $\Hom_{\Lambda}(N_i, P(i))$ and $\Hom_{\Lambda}(P(i), B)$ are non-zero, but $\Hom_{\Lambda}(N_i, A)=0$.
So the diagram can not commute, and hence $P(i)$ can not almost factor through $f_{i,l}$.
Similarly, $P(i)$ can not almost factor through $g_{i,m}$.
\end{proof}

\begin{proposition}
Let $Q$ be the quiver
$$1\rightarrow2\rightarrow\cdots\rightarrow n-1 \rightarrow n.$$
Then we have
$$\Det(\Lambda)=\{P(i),S(i)\mid 1 \leq i \leq n-1\}\ and \ |\Det(\Lambda)|=2n-2.$$
\end{proposition}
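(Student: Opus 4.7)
The plan is to use the reduction already established in the paper: by Corollary 3.8, $|\Det(\Lambda)|\le 2n-2$, and by Theorem 3.5 together with Lemma 2.8(2), in order to compute $\Det(\Lambda)$ it suffices to compute both minimal right determiners attached to each of the $n-1$ almost split sequences of type (2.1) and then check that we end up with exactly $2n-2$ pairwise non-isomorphic modules.

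First I would identify these $n-1$ almost split sequences explicitly for the linearly oriented quiver $1\to 2\to\cdots\to n$. For the arrow $\alpha_i\colon i\to i+1$, I would argue that both $U_{\alpha_i}$ and $V_{\alpha_i}$ must be trivial strings: any non-trivial inverse string $U_{\alpha_i}$ with $s(U_{\alpha_i})=e(\alpha_i)=i+1$ would have to end in $\alpha_i^{-1}$ (since $\alpha_i$ is the only arrow ending at $i+1$), creating the forbidden substring $\alpha_i^{-1}\alpha_i$; the same argument on the other side forces $V_{\alpha_i}=\epsilon_i$. Hence $N_{\alpha_i}=\alpha_i$, so $M(U_{\alpha_i})=S(i+1)$, $M(N_{\alpha_i})=[i,i+1]$, and $M(V_{\alpha_i})=S(i)$. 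This gives the list
\[
0\to S(i+1)\buildrel f_i\over\longrightarrow [i,i+1]\buildrel g_i\over\longrightarrow S(i)\to 0,\qquad 1\le i\le n-1,
\]
of all almost split sequences of type (2.1).

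Next I would compute the two determiners attached to each such sequence. For the epimorphism $g_i$, Theorem 3.5(2.3) (equivalently Corollary 3.7) gives $C(g_i)=M(V_{\alpha_i})=S(i)$. For the monomorphism $f_i$, the cokernel is $[i,i+1]/S(i+1)\cong S(i)$, so $\Soc(\Coker f_i)=S(i)$, and Corollary 3.2 yields $C(f_i)=P(i)$. Thus $\{S(i),P(i)\mid 1\le i\le n-1\}\subseteq\Det(\Lambda)$.

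Finally I would verify that these $2(n-1)$ modules are pairwise non-isomorphic and conclude. In the linear orientation, $S(i)=[i,i]$ and $P(j)=[j,n]$, so $S(i)\cong P(j)$ is only possible when $i=j=n$, which is excluded by the range $1\le i,j\le n-1$; the $S(i)$ (resp. $P(j)$) are clearly mutually distinct. Combining this lower bound of $2n-2$ with the upper bound $|\Det(\Lambda)|\le 2n-2$ of Corollary 3.8 forces equality and pins down $\Det(\Lambda)$. The only genuinely technical step is the string-theoretic identification of the $N_{\alpha_i}$, and this is easy in the linear case because the absence of multiple arrows at each vertex immediately collapses $U_{\alpha_i}$ and $V_{\alpha_i}$ to the trivial strings; after that, everything reduces to invoking the general lemmas from Section 2 and the corollaries of Section 3.
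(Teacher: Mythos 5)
Your proof is correct, and it follows the same overall skeleton as the paper's (reduce via Theorem 3.5 to the $n-1$ almost split sequences of type (2.1), identify them as $0\to S(i+1)\to[i,i+1]\to S(i)\to 0$, and read off the two determiners from each), but the two key steps are carried out by genuinely different means. To identify the almost split sequences, the paper does not touch the string machinery at all: it invokes [1, Corollary V.4.2] to place all simples in a single $\tau$-orbit and then uses the additivity of dimension vectors to force the end terms to be $S(i)$ and $S(i-1)$; you instead compute $N_{\alpha_i}=U_{\alpha_i}\alpha_i V_{\alpha_i}$ directly and observe that the absence of a second arrow at each vertex collapses $U_{\alpha_i}$ and $V_{\alpha_i}$ to trivial strings. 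Your route has the advantage of being self-contained within the Butler--Ringel setup already built in Section 2 and of generalizing more readily to the bound-quiver case; the paper's route is shorter but leans on an external fact about $\tau$-orbits of simples for linearly oriented $\mathbb{A}_n$. For the monomorphisms, the paper reduces (via Theorem 3.5(1.2)) to the inclusions $\rad P(i)\to P(i)$ and checks by hand that $P(i)$ almost factors through them, whereas you apply Corollary 3.2 to the left-hand map of the almost split sequence and compute $C(f_i)$ as the projective cover of $\Soc(\Coker f_i)=S(i)$; these are equivalent, but your version again avoids a separate ad hoc verification. Finally, your explicit check that the $2(n-1)$ modules $S(i)=[i,i]$ and $P(j)=[j,n]$ ($1\le i,j\le n-1$) are pairwise non-isomorphic is a detail the paper leaves implicit, and it is a worthwhile addition since the count $|\Det(\Lambda)|=2n-2$ depends on it.
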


\begin{proof}
First consider the case for epic irreducible morphisms.
We claim that all the almost split sequences of type (2.1) are in the form:
$$0 \to S(i) \to M_i \to S(i-1) \to 0$$
with $2\leq i \leq n$. By [1, Corollary V.4.2], we have that all the non-isomorphic simple modules
$S(i) (1 \leq i \leq n$) belong to the same $\tau$-orbit.
Now assume the almost split sequence starting with $S(i)$ is
$$0 \to S(i) \to M_i \to S(j) \to 0$$
with $M_i$ indecomposable and $j<i$. Because
$$\textbf{dim}S(i)+\textbf{dim}S(j)=\textbf{dim}M_i,$$
we have $j=i-1$. Otherwise, we have $\textbf{dim}M_i=[i,i]+[j,j]$. It implies that $M_i$ is decomposable,
a contradiction. The claim is proved. By Theorem 3.5 and Lemma 2.5, all the minimal
right determiners of all epic irreducible morphisms are $\{S(i)\mid 1 \leq i \leq n-1\}$.

In the case for monic irreducible morphisms, by Theorem 3.5, it suffices to consider all monic irreducible morphisms
in the form $f_i: X_i \rightarrow P(i)$ for $1 \leq i \leq n-1$. Note that here $X_i=\rad P(i)$. So $P(i)$
almost factors through $f_i$, and hence $C(f_i)=P(i)$ for $1 \leq i \leq n-1$ by Theorem 2.4(2) and Remark 3.3.
\end{proof}

Similarly, we have the following

\begin{proposition}
If $Q$ has a unique sink $i$, then
$$|\Det(\Lambda)|=2n-2$$ and $\{P(j)\mid 1\leq i\leq n$ with $j \neq i\}$
are all the projective minimal right determiners.
\end{proposition}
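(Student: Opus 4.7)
The plan is to follow the structure of the proof of Proposition 3.10, leaning on the explicit description of irreducible monomorphisms into projectives. First I would argue that the hypothesis ``unique sink $i$'' forces the orientation of $Q$ to be $1\to 2\to\cdots\to i\leftarrow i+1\leftarrow\cdots\leftarrow n$: a second sink is ruled out by uniqueness, and an interior source on either arm would propagate to create a second sink at the opposite endpoint. From this one reads off $P(j)=[j,i]$ for $j\le i$ and $P(j)=[i,j]$ for $j\ge i$, so $P(i)=S(i)$ while $\rad P(j)$ is an indecomposable interval for every $j\ne i$. In particular there are exactly $n-1$ irreducible monomorphisms into projectives, namely $f_j\colon \rad P(j)\hookrightarrow P(j)$ for $j\ne i$, and each has cokernel $S(j)$.

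Next I would apply the machinery of Section 3. Corollary 3.2 yields $C(f_j)=P(j)$, so $\{P(j):j\ne i\}$ is a set of $n-1$ distinct projective determiners. The proof of Proposition 3.1 identifies the $n-1$ AR sequences of type (2.1) as
$$0\to\tau(\Coker f_j)\to M_j\to \Coker f_j\to 0,\qquad j\in\{1,\ldots,n\}\setminus\{i\},$$
with $\Coker f_j=S(j)$. Combining Theorem 3.5(1) with Corollary 3.4(1) then forces the monic minimal right determiners to be exactly $\{P(j):j\ne i\}$, so $P(i)$ never appears. Theorem 3.5(2) identifies each epic minimal right determiner with the end term of one of these same AR sequences, hence with one of the simples $S(j)$ for $j\ne i$; Corollary 3.4(2) together with the injectivity of $\tau$ on non-projectives shows that these $n-1$ epic determiners are pairwise distinct.

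Finally, the projective list $\{P(j):j\ne i\}$ and the non-projective list $\{S(j):j\ne i\}$ are disjoint, since for $j\ne i$ the module $P(j)$ has composition length at least two while $S(j)$ has length one. Thus $|\Det(\Lambda)|\ge 2(n-1)$, and together with the upper bound from Corollary 3.8 this yields $|\Det(\Lambda)|=2n-2$, with the projective minimal right determiners being exactly $\{P(j):j\ne i\}$. The only step that is not quite routine bookkeeping is the identification of the end terms of the type-(2.1) AR sequences with the simples $\{S(j):j\ne i\}$; this is delivered by Proposition 3.1 once one knows that $\rad P(j)$ is indecomposable for every $j\ne i$, and the rest of the argument is then just counting.
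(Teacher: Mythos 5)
Your proof is correct and takes essentially the approach the paper intends: the paper offers no separate argument for this proposition (it is stated as following ``similarly'' from Proposition 3.10), and your steps --- forcing the orientation $1\to\cdots\to i\leftarrow\cdots\leftarrow n$, observing that $\rad P(j)$ is a nonzero indecomposable with $\Coker\bigl(\rad P(j)\hookrightarrow P(j)\bigr)=S(j)$ for $j\neq i$, and then counting via Theorem 3.5, Corollaries 3.2, 3.4 and 3.7 --- are exactly the adaptation of that proof. The only caveat is that your explicit formulas $P(j)=[j,i]$ and $P(j)=[i,j]$ presume $\Lambda=KQ$, whereas the proposition is also invoked for $\Lambda=KQ/I$ in Theorem 3.15; the argument survives there because all it really uses is that $\rad P(j)$ is a nonzero indecomposable interval module with top $S(j)$ for $j\neq i$ and $\rad P(i)=0$, which still holds in the presence of an admissible ideal.
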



The following result shows that the distribution of the projective minimal right determiners of irreducible morphisms
can determine the orientation of a quiver in some cases.

\begin{corollary}
We have
\begin{enumerate}
\item[(1)] $|\Det(\Lambda)|=2n-2$
and the projective minimal right determiners are $\{P(i)\mid 1 \leq i \leq n-1\}$ if and only if
$Q$ is the quiver
$$1 \rightarrow 2 \rightarrow\cdots\rightarrow n-1 \rightarrow n.$$
\item[(2)] $|\Det(\Lambda)|=2n-2$ and the projective minimal right determiners are $\{P(i)\mid 2 \leq i \leq n\}$
if and only if $Q$ is the quiver
$$1 \leftarrow 2 \leftarrow\cdots\leftarrow n-1 \leftarrow n.$$
\end{enumerate}
\end{corollary}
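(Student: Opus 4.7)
The plan is to prove part (1) in full and then recover (2) by applying the same argument to the opposite orientation of $Q$. For the ``if'' direction of (1), the linear orientation $1\to 2\to\cdots\to n$ is exactly the hypothesis of Proposition 3.10, which immediately gives $|\Det(\Lambda)|=2n-2$ together with the identification of the projective minimal right determiners as $\{P(i)\mid 1\leq i\leq n-1\}$.

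For the ``only if'' direction I would assume both conditions and argue in three steps. First, I rule out interior sources: if some $i$ with $2\leq i\leq n-1$ were a source of $Q$, Proposition 3.9 would force $P(i)$ not to be a minimal right determiner of any irreducible morphism, contradicting the assumption that every element of $\{P(1),\ldots,P(n-1)\}$ is a determiner. Hence $Q$ has no interior source.

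Second, I would invoke the combinatorics of type-$A_n$ orientations: an interior source at vertex $i$ corresponds exactly to an adjacent ``$\leftarrow\rightarrow$'' pattern in the sequence of arrow directions $\alpha_1,\ldots,\alpha_{n-1}$, so banning all such patterns forces the sequence to be monotone of the form $\rightarrow\cdots\rightarrow\leftarrow\cdots\leftarrow$. Consequently $Q$ has a unique sink $k$, located at the vertex where the orientation switches (or at $n$, respectively $1$, when the sequence consists entirely of $\rightarrow$'s, respectively $\leftarrow$'s). Applying Proposition 3.11 to this unique sink yields projective determiners $\{P(j)\mid j\neq k\}$; matching this with the hypothesis $\{P(1),\ldots,P(n-1)\}$ forces $k=n$, whence $Q$ is the linear orientation $1\to 2\to\cdots\to n$, as desired.

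The main leverage comes from combining Proposition 3.9, which excludes interior-source projectives from $\Det(\Lambda)$, with Proposition 3.11, which pins down the sink from the list of projective determiners. The only genuinely new ingredient is the orientation classification for type-$A_n$ quivers with $p=0$, and I expect this to be a short combinatorial step rather than a serious obstacle.
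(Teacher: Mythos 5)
Your proposal is correct and follows essentially the same route as the paper: Proposition 3.10 (and its mirror) for the ``if'' direction, and Propositions 3.9 and 3.11 for the ``only if'' direction, with the unique sink pinned down by comparing $\{P(j)\mid j\neq k\}$ against the hypothesized set of projective determiners. The only difference is that you spell out the combinatorial step (no interior source forces the arrow-direction sequence to be $\rightarrow\cdots\rightarrow\leftarrow\cdots\leftarrow$, hence a unique sink), which the paper leaves implicit in the phrase ``at most one sink in the vertices $\{i\mid 2\leq i\leq n-1\}$''; this is a welcome clarification rather than a departure.
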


\begin{proof} The necessity follows from Proposition 3.10. Conversely, by Proposition 3.9
we have that there are no sources and at most one sink in the vertices $\{i\mid 2 \leq i \leq n-1\}$.
If $i$ with $2 \leq i \leq n-1$ is the unique sink, then $P(i)$ is not a minimal right determiner
by Proposition 3.11, a contradiction. The proof is finished.
\end{proof}

The general formula about $|\Det(\Lambda)|$ for path algebras is the following

\begin{theorem}
Let $\Lambda=KQ$ and set $$p:=|\{i\mid i\ \text{is a source in}\ Q \ \text{with}\ 2\leq i\leq n-1\}|.$$
Then we have
$$|\Det(\Lambda)|=
\begin{cases}
2n-2,   &\mbox{if $p=0$;}\\
2n-p-1, &\mbox{if $p\geq 1$.}
\end{cases}$$
\end{theorem}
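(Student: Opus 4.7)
The plan is to count $\Det(\Lambda)$ by separating its elements into the two classes supplied by Theorem 3.5: the non-projective determiners coming from epic irreducible morphisms and the projective determiners $P(j)$ coming from monic irreducible morphisms.

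First I would use Corollary 3.7 to identify the non-projective determiners with $\{M(V_{\alpha_i}):1\leq i\leq n-1\}$, and observe that these are pairwise non-isomorphic (since distinct arrows $\alpha_i$ index distinct almost split sequences of type (2.1), and the end term of such a sequence determines it up to isomorphism), producing exactly $n-1$ elements. By Theorem 3.5(2.3) these are all non-projective, hence disjoint from the projective determiners, so $|\Det(\Lambda)|=(n-1)+\nu$, where $\nu$ denotes the number of projective determiners.

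To compute $\nu$, Corollary 3.2 together with Proposition 3.1(1) shows that $P(j)\in\Det(\Lambda)$ if and only if $S(j)=\Soc(M(V_{\alpha_i}))$ for some $i$, while Proposition 3.9 excludes the $p$ projectives indexed by interior sources. When $p=0$, a short count of arrows (equivalently, the alternation of sources and sinks along the underlying path of $\mathbb{A}_n$ together with the forced types at the two endpoints) yields that the number of sinks in $Q$ equals the number of interior sources plus one, so $Q$ has a unique sink, and Proposition 3.11 directly gives $|\Det(\Lambda)|=2n-2$. When $p\geq 1$, I aim to show that every vertex $j$ which is not an interior source satisfies $P(j)\in\Det(\Lambda)$; together with Proposition 3.9 this produces exactly $n-p$ projective determiners, and hence $|\Det(\Lambda)|=(n-1)+(n-p)=2n-p-1$.

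For this last step I would exhibit, for each non-interior-source vertex $j$, an arrow $\alpha$ with $\Soc(M(V_\alpha))=S(j)$. If $j$ is an interior middle vertex or an endpoint source, the unique arrow $\alpha$ with $s(\alpha)=j$ admits no further inverse extension at $s(\alpha)$, so $V_\alpha$ is trivial and $M(V_\alpha)=S(j)$. If $j$ is a sink, the hypothesis $p\geq 1$ allows the choice of an interior source $s$ that is a neighbour of $j$ in the alternating source/sink sequence along the path (this is precisely where $p\geq 1$ matters, since in the $p=0$ case the unique sink is sandwiched between endpoint sources and has no interior source as such a neighbour); then the arrow $\alpha$ out of $s$ directed toward $j$ has maximal inverse extension $V_\alpha$ travelling through the intervening middle vertices and terminating at $j$, so $M(V_\alpha)$ has socle $S(j)$. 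The main obstacle will be this last case, where one must verify that the maximal inverse extension from $s$ really ends at $j$; this reduces to checking that the vertices strictly between $s$ and $j$ are all middle vertices, which follows from the alternating pattern of sources and sinks in $\mathbb{A}_n$ quivers.
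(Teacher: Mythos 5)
Your overall strategy is sound and genuinely different from the paper's: the paper computes the projective determiners by locating, inside the Auslander--Reiten quiver, the irreducible maps $X\to P(i)$ and checking directly which projectives almost factor through them (splitting the quiver into the three shapes around sinks and sources), whereas you reduce everything to the socle criterion of Corollary 3.2 combined with Proposition 3.1(1), so that $P(j)\in\Det(\Lambda)$ if and only if $S(j)=\Soc(M(V_{\alpha}))$ for some arrow $\alpha$, and then compute these socles from the string combinatorics. Your treatment of $p=0$ (the identity $r=p+1$ forcing a unique sink, then Proposition 3.11) is in fact cleaner than the paper's sub-case analysis.

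However, there is a concrete error in the step you yourself flag as the main obstacle. With the paper's convention $N_\alpha=U_\alpha\alpha V_\alpha$ and $s(c_1\cdots c_m)=s(c_m)$, the inverse string $V_\alpha$ satisfies $e(V_\alpha)=s(\alpha)$: it is attached at the \emph{starting} vertex of $\alpha$, and when nontrivial its first letter is $\gamma^{-1}$ for the \emph{other} arrow $\gamma\neq\alpha$ emanating from $s(\alpha)$. Consequently, for an interior source $s$ adjacent to the sink $j$, the arrow $\alpha$ out of $s$ directed \emph{toward} $j$ has $V_\alpha$ travelling \emph{away} from $j$, terminating at the first sink on the opposite side of $s$; so $\Soc(M(V_\alpha))\neq S(j)$ and the verification you propose would fail. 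This is visible in the paper's Example 4.1: for $\alpha_{2i}\colon 2i+1\to 2i$ one has $M(V_{\alpha_{2i}})=[2i+1,2i+2]$ with socle $S(2i+2)$, the sink on the far side of the source $2i+1$. The fix is simply to take $\alpha$ to be the arrow out of $s$ pointing \emph{away} from $j$; then $V_\alpha$ is the maximal inverse string through the flow-through vertices between $s$ and $j$, its endpoint $s(V_\alpha)$ is the first sink in that direction, namely $j$, and $\Soc(M(V_\alpha))=S(j)$ as required. With that correction (and the same alternation argument you already invoke to guarantee that every sink has a neighbouring interior source once $p\geq 1$), the count $(n-1)+(n-p)=2n-p-1$ goes through.
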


\begin{proof}
By Theorem 3.5(1), to determine the minimal right determiners of all monic irreducible morphisms, it
suffices to determine the minimal right determiners of all monic irreducible morphisms in the form $f:X \rightarrow P(i)$.

If $p=0$, then there are no sources and at most one sink in the vertices $\{i\mid 2 \leq i \leq n-1\}$.

If there are no sinks in the vertices $\{i\mid 2 \leq i \leq n-1\}$, then the assertion follows from Proposition 3.10.
If $i$ with $2 \leq i \leq n-1$ is the unique sink, then $P(i)$ is not a minimal right determiner
by Proposition 3.9. Note that the subquiver in the Auslander-Reiten quiver of $\mod \Lambda$
consisting of all projectives is as follows.
$$\xymatrix@-13pt{
&&& \makebox[20pt]{${P(1)}$}\\
&& \makebox[20pt]{$\cdots$} \ar[ur]^{f_1} \\
& \makebox[20pt]{${P(i-1)}$}  \ar[ur]^{f_{i-2}} \\
\makebox[20pt]{$P(i)$} \ar[ur]^{f_{i-1}} \ar[dr]_{f_{i}} \\
& \makebox[20pt]{${P(i+1)}$}  \ar[dr]_{f_{i+1}} \\
&& \makebox[20pt]{$\cdots$} \ar[dr]_{f_{n-1}} \\
&&& \makebox[20pt]{${P(n).}$}
}$$
It is easy to check that $C(f_j)=P(j)$ and $C(f_k)=P(k+1)$ for any $1\leq j \leq i-1$ and $i\leq k \leq n-1$.
Now by Corollary 3.7, we have
$$|\Det(\Lambda)|=(n-1)+(i-1)+(n-i)=2n-2.$$

Now let $p\geq1$. Denote the sinks and sources in the quiver $Q$ by $t_l$ and $s_m$ in order respectively.
Take the possible subquivers in the Auslander-Reiten quiver of $\mod \Lambda$ in the following three shapes into account.

(1)
$$\xymatrix@-18pt{
&& P(1) \\
& \cdots \ar[ur]^{f_1} \\
P(t_1). \ar[ur]^{f_{t_1-1}}}
$$
It is easy to check that $C(f_i)=P(i)$ for any $1\leq i \leq t_1-1$.

(2)
$$\xymatrix@-10pt{
& \cdots\\
P(t_l)  \ar[dr]\ar[ur]\\
& \cdots \ar[dr]^{f} \\
&& P(s_l) \\
& \cdots \ar[ur]_{g} \\
P(t_{l+1}) \ar[dr]\ar[ur]_k\\
& \cdots.}
$$
For any $0\neq h: P(a) \rightarrow P(b)$, we have $C(f)=P(b)$ except $f$ and $g$.

For $f:P(a)\to P(s_l)$, we have the following commutative diagram
$$\xymatrix{
\rad P(t_{l+1})(=0) \ar[d] \ar[r]
& P(t_{l+1}) \ar[d]^h  \\
P(a)  \ar[r]^{f}
& P(s_l), }
$$
where $h$ is the composition of monomorphisms from $k$ to $g$.
Thus $C(f)=P(t_{l+1})$ by Theorem 3.5(1). Similarly, we have $C(g)=P(t_l)$.

(3) $$\xymatrix@-18pt{
P(t_{p+1}) \ar[dr]^{\ \ f_{t_{p+1}}} \\
& \cdots \ar[dr]^{f_{n-1}} \\
&& P(n).}
$$
It is easy to check that $C(f_i)=P(i+1)$ for any $t_{p+1}\leq i \leq n-1$.

Therefore, we conclude that
\begin{enumerate}
\item[]
\ \ \ \ $\{$the minimal right determiners of monic irreducible morphisms in $\mod \Lambda\}$\\
$=\{P(i)\mid 1\leq i \leq n$ but $i\neq s_1, \cdots, s_p\}$.
\end{enumerate}
Combining it with Corollary 3.8, we have
$$|\Det(\Lambda)|=(n-1)+(n-p)=2n-p-1.$$
\end{proof}

For an admissible ideal $I$ of $KQ$ and a subquiver $Q'$ of $Q$, we write $I|_{Q'}:=I\cap KQ'$.
To calculate $|\Det(\Lambda)|$ for bound quiver algebras, we introduce the following

\begin{definition}
Assume that $I$ is an admissible ideal of $KQ$ and there exist at least two sinks in $Q$.
Let $s_1<\cdots < s_p$ be the sources in $Q$ with $2 \leq s_m\leq n-1$ for any $1\leq m \leq p$.
For a sink $i$ in $Q$, we define the {\bf sink ideal} $J_i$ of $\Lambda$ as follows.
\begin{enumerate}
\item[(1)] $J_i=I | _{<1,s_1>}$ if $i=1$.
\item[(2)] $J_i=I | _{<s_p,n>}$ if $i=n$.
\item[(3)] $J_i=I | _{<i,s_1>}$ if $i(\neq 1)$ is the first sink and $<1,i>$ is linear.
\item[(4)] $J_i=I | _{<s_p,i>}$ if $i(\neq n)$ is the last sink and $<i,n>$ is linear.
\item[(5)] $J_i=I | _{<s,s'>}$ except the foregoing cases, where $s$, $s'$ are sources and $<s,i>$, $<i,s'>$ are linear.
In this case, we define
\begin{enumerate}
\item[(a)] $J_i \neq 0$ if both $I | _{<s,i>}$ and $I | _{<i,s'>}$ are non-zero;
\item[(b)] $J_i=0$ if at least one of $I | _{<s,i>}$ and $I | _{<i,s'>}$
is zero. In particular, $J_i=0$ if $i=s+1$ or $i=s'-1$.
\end{enumerate}
\end{enumerate}
\end{definition}

Note that any sink in $Q$ exactly belongs to one of these five cases if there exist at least two sinks in $Q$.
Let $Q$ be the quiver
$$\xymatrix@-10pt{1 \ar[r]^{\alpha_1} &2 &3\ar[l]_{\alpha_2}\ar[r]^{\alpha_3} &4 \ar[r]^{\alpha_4}
&5 &6\ar[l]_{\alpha_5} &7\ar[l]_{\alpha_6} &8\ar[l]_{\alpha_7}\ar[r]^{\alpha_8} &9\ar[r]^{\alpha_9}
&10\ar[r]^{\alpha_{10}} &11\ar[r]^{\alpha_{11}} &12\ar[r]^{\alpha_{12}} &13 }
$$
and $I=<\alpha_4 \alpha_3, \alpha_5 \alpha _6 \alpha_7, \alpha_9 \alpha_8, \alpha_{12} \alpha_{11}>$
an admissible ideal of $KQ$. We have that for the sink 2, $J_2= I|_{<2,3>}=0$;
for the sink 5, $J_5= I|_{<3,8>}=<\alpha_4 \alpha_3, \alpha_5 \alpha _6 \alpha_7 >$; and
for the sink 13, $J_{13}= I|_{<8,13>}=<\alpha_9 \alpha_8, \alpha_{12} \alpha_{11}>$.
Moreover, let $Q^{op}$ be the opposite of the above quiver
$$\xymatrix@-10pt{
 1
&2 \ar[l]_{\alpha_1} \ar[r]^{\alpha_2}
&3
&4 \ar[l]_{\alpha_3}
&5 \ar[l]_{\alpha_4} \ar[r]^{\alpha_5}
&6 \ar[r]^{\alpha_6}
&7 \ar[r]^{\alpha_7}
&8
&9 \ar[l]_{\alpha_8}
&10 \ar[l]_{\alpha_9}
&11 \ar[l]_{\alpha_{10}}
&12 \ar[l]_{\alpha_{11}}
&13 \ar[l]_{\alpha_{12}} }
$$
and $I^{op}=<\alpha_3 \alpha_4, \alpha_7 \alpha _6 \alpha_5, \alpha_8 \alpha_9, \alpha_{11} \alpha_{12}>$
an admissible ideal of $KQ^{op}$.
Then we have that for the sink 1, $J_1= I|_{<1,2>}=0$; for the sink 3, $J_3= I|_{<2,5>}=<\alpha_3 \alpha_4 >$;
and for the sink 8, $J_8= I|_{<5,13>}=<\alpha_7 \alpha _6 \alpha_5, \alpha_8 \alpha_9, \alpha_{11} \alpha_{12}>$.

We are now in a position to give the general formula about $|\Det(\Lambda)|$ for bound quiver algebras.

\begin{theorem}
Let $\Lambda=KQ/I$ with $I$ an admissible ideal of $KQ$, and set
$$p:=|\{i\mid i\ \text{is a source in}\ Q \ \text{with}\ 2\leq i\leq n-1\}|,$$
$$q:=|\{i \mid J_i \neq 0\}|,$$
$$r:=|\{i\mid i\ \text{is a sink in}\  Q\ \text{with}\ 1\leq i\leq n\}|.$$
Then we have
$$
|\Det(\Lambda)|=
\begin{cases}
2n-2, &\mbox{if $r=1$;}\\
2n-p-q-1, &\mbox{if $r\geq 2$.}
\end{cases}
$$
\end{theorem}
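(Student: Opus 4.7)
The plan is to split $|\Det(\Lambda)|$ into contributions from epic and monic irreducible morphisms. The case $r=1$ is precisely Proposition~3.11, so from here I focus on $r\geq 2$. By Theorem~3.5(2.3) and Corollary~3.7 the epic irreducible morphisms contribute the $n-1$ pairwise distinct indecomposable modules $M(V_{\alpha_i})$, $1\leq i\leq n-1$; each such $M(V_{\alpha_i})$ is non-projective, hence disjoint from the monic contributions, which by Corollary~3.2 are all of the form $P(j)$. Writing $\Det_P$ for the set of projective minimal right determiners arising from monic irreducible morphisms, I get $|\Det(\Lambda)|=(n-1)+|\Det_P|$.

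Combining Theorem~3.5(1.3), Proposition~3.1(1) and Corollary~3.2, a projective $P(j)$ lies in $\Det_P$ if and only if $S(j)=\Soc(M(V_{\alpha_i}))$ for some $i$. Since each $V_{\alpha_i}$ is an inverse string, the uniserial module $M(V_{\alpha_i})$ has socle $S(s(V_{\alpha_i}))$ at its ``deep'' end, so $\Det_P=\{P(s(V_{\alpha_i})):1\leq i\leq n-1\}$ and the problem reduces to counting the distinct values of $s(V_{\alpha_i})$. I interpret $V_\alpha$ concretely as the unique maximal extension of $\alpha$ by inverse arrows on the right, equivalently as a walk starting at $s(\alpha)$ which at each step moves along the unique outgoing arrow other than $\alpha$ (at the first step) or other than the arrow just traversed (later); because every vertex of a type $\mathbb{A}_n$ quiver has degree at most two, this walk is uniquely determined.

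If $s(\alpha)$ is not an interior source then $\alpha$ is its only outgoing arrow and the walk cannot start, so $s(V_\alpha)=s(\alpha)$. If $s(\alpha)$ is an interior source, the walk proceeds along the linear arm opposite to $\alpha$ until blocked, either by reaching a geometric sink or by a direct subpath in $I$. The key technical observation, which I see as the main obstacle, is that along an arm $s\to v_1\to\cdots\to v_k$ with consecutive arrows $\beta_0,\beta_1,\ldots,\beta_{k-1}$, extending the inverse string by $\beta_j^{-1}$ is valid precisely when no direct suffix $\beta_j\beta_{j-1}\cdots\beta_i$ lies in $I$, so the walk from $s$ reaches the vertex $v_m$ if and only if $I|_{<s,v_m>}=0$. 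Using this I classify each vertex $v\in\{1,\ldots,n\}$: interior sources never appear as any $s(V_\alpha)$ (having no incoming arrows, no walk ends at them, and the trivial-walk case fails since they have two outgoing arrows); sinks $v$ with $J_v\neq 0$ are excluded because, by Definition~3.14, every relevant arm $<s,v>$ or $<v,s'>$ harbors a relation blocking the walk from the adjacent interior source; and every remaining vertex either appears via the trivial-walk case as the source of its unique outgoing arrow, or is reached by an unblocked walk from an adjacent interior source along an arm with vanishing restricted ideal. Therefore $|\{s(V_{\alpha_i})\}|=n-p-q$ and $|\Det(\Lambda)|=(n-1)+(n-p-q)=2n-p-q-1$.
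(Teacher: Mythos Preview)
Your proof is correct, and the overall decomposition $(n-1)+|\Det_P|$ matches the paper's. Where you diverge is in the computation of $|\Det_P|$. The paper argues case by case in the Auslander--Reiten quiver: for each vertex $i$ it inspects the subquiver of projectives around $P(i)$, exhibits (or rules out) a specific monomorphism $f$ or $g$ into the neighboring $P(s_l)$ with $C(f)=P(i)$, and invokes Proposition~3.9 to exclude interior sources. You instead pass through Corollary~3.2 to identify $\Det_P$ with $\{P(s(V_{\alpha_i}))\}$ and turn the question into a purely combinatorial one about the terminal vertex of each inverse string $V_{\alpha}$, which you then settle by the ``walk along the arm'' analysis. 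This bypasses the almost-factoring arguments and Proposition~3.9 entirely, and it makes transparent \emph{why} the sink ideals $J_i$ are exactly the right obstruction: they record whether the maximal inverse extension $V_\alpha$ from an adjacent interior source is blocked before reaching the sink. The paper's route is more pictorial and ties the result visibly to the shape of the projective part of the AR quiver; yours is more uniform and shows that the whole monic count is governed by the single map $\alpha\mapsto s(V_\alpha)$. One small point worth making explicit in your write-up is that the $n-1$ modules $M(V_{\alpha_i})$ are pairwise non-isomorphic (since distinct arrows give distinct almost split sequences, hence distinct end terms by Lemma~2.8), and that for type $\mathbb{A}_n$ every admissible ideal is monomial, so $I|_{<s,v>}=0$ really is equivalent to ``no subpath of the arm lies in $I$''.
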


\begin{proof}
If $r=1$, then the assertion follows from Propositions 3.10 and 3.11.

Now let $r \geq 2$. We claim that $P(i)$ is not the minimal right determiner of any morphism
if and only if either of the following two conditions is satisfied.
\begin{enumerate}
\item[(1)] $i$ is a source with $2 \leq i \leq n-1$.
\item[(2)] $i$ is a sink and $J_i \neq 0$ with $1 \leq i \leq n$.
\end{enumerate}

Let $f_i: \rad P(i) \rightarrow P(i)$ be the inclusion for any $1\leq i\leq n$.
If $i$ is a source with $i=1$ or $n$, then $P(i)=C(f_i)$; if $i$ is a source with $2\leq i \leq n-1$,
then $P(i)$ is not the minimal right determiner of any irreducible morphism by Proposition 3.9.
If $i$ is neither a source nor a sink, then $P(i)=C(f_i)$.

Now let $i$ be a sink and let $s_1<\cdots < s_p$ be the sources with $2 \leq s_m\leq n-1$ for any $1\leq m \leq p$.
We proceed by the classification of sink ideals in Definition 3.14.

(1) Let $i=1$. Then $J_i=I | _{<1,s_1>}$. If $J_i=0$, then there exists a subquiver in the Auslander-Reiten quiver of $\mod \Lambda$
as follows.
$$\xymatrix@-10pt{
P(1)  \ar[dr]\\
& \cdots \ar[dr] \\
&& P(s_1) \\
& \cdots.\ar[ur]_{g}}
$$
We have $C(g)=P(1)$. If $J_i \neq 0$, then $P(1)$ is not the minimal right determiner of any irreducible morphism in
$\mod \Lambda$ since $\Hom (P(1), P(s_1))=0$. About the arguments here we refer to that in the proofs of Theorem 3.13
and Proposition 3.11, similarly hereinafter.

(2) Let $i=n$. Then $J_i=I | _{<s_p,n>}$. If $J_i=0$, then there exists a subquiver in the Auslander-Reiten quiver of $\mod \Lambda$
as follows.
$$\xymatrix@-10pt{
& \cdots \ar[dr]^{f} \\
&& P(s_p) \\
& \cdots \ar[ur] \\
P(n). \ar[ur]}
$$
We have $C(f)=P(n)$. If $J_i \neq 0$, then $P(n)$ is not the minimal right determiner of any irreducible morphism in $\mod \Lambda$.

(3) Let $i(\neq 1)$ be the first sink and $<1,i>$ linear. Then $J_i=I | _{<i,s_1>}$.
If $J_i=0$, then there exists a subquiver in the Auslander-Reiten quiver of $\mod \Lambda$
as follows.
$$\xymatrix@-10pt{
& \cdots\\
P(i) \ar[ur] \ar[dr]\\
& \cdots \ar[dr] \\
&& P(s_1) \\
& \cdots.\ar[ur]_{g}}
$$
We have $C(g)=P(i)$. If $J_i \neq 0$, then $P(i)$ is not the minimal right determiner of any irreducible morphism in $\mod \Lambda$.

(4) Let $i(\neq n)$ be the last sink and $<i,n>$ linear. Then $J_i=I | _{<s_p,i>}$. If $J_i=0$,
then there exists a subquiver in the Auslander-Reiten quiver of $\mod \Lambda$ as follows.
$$\xymatrix@-10pt{
& \cdots \ar[dr]^f \\
&& P(s_p) \\
& \cdots \ar[ur] \\
P(i) \ar[dr]\ar[ur]\\
& \cdots.}
$$
We have $C(f)=P(i)$. If $J_i \neq 0$, then $P(i)$ is not the minimal right determiner of any irreducible morphism in $\mod \Lambda$.

(5) Finally, let $J_i=I | _{<s,s'>}$ except the foregoing cases, where $s$, $s'$ are sources and $<s,i>$, $<i,s'>$ are linear.
If $J_i=0$, then at least one of $I | _{<s,i>}$ and $I | _{<i,s'>}$ is zero.

If both of $I | _{<s,i>}$ and $I | _{<i,s'>}$ are zero,
then there exists a subquiver in the Auslander-Reiten quiver of $\mod \Lambda$ as follows.
$$\xymatrix@-10pt{
& \cdots \ar[dr]^f \\
&& P(s) \\
& \cdots \ar[ur] \\
P(i) \ar[dr]\ar[ur]\\
& \cdots\ar[dr]\\
& & P(s')\\
& \cdots. \ar[ur]_g}$$
We have $C(f)=P(i)=C(g)$.

If $I | _{<s,i>}=0$ and $I | _{<i,s'>}\neq 0$,
then there exists a subquiver in the Auslander-Reiten quiver of $\mod \Lambda$ as follows.
$$\xymatrix@-10pt{
& \cdots \ar[dr]^f \\
&& P(s) \\
& \cdots \ar[ur] \\
P(i) \ar[dr]\ar[ur]\\
& \cdots.}$$
We have $C(f)=P(i)$.

If $I | _{<s,i>}\neq 0$ and $I | _{<i,s'>}=0$,
then there exists a subquiver in the Auslander-Reiten quiver of $\mod \Lambda$ as follows.
$$\xymatrix@-10pt{
& \cdots \\
P(i) \ar[dr]\ar[ur]\\
& \cdots\ar[dr]\\
& & P(s')\\
& \cdots. \ar[ur]_g}$$
We have $C(g)=P(i)$.

If $J_i\neq 0$, then both of $I | _{<s,i>}$ and $I | _{<i,s'>}$ are non-zero. In this case,
$P(i)$ is not the minimal right determiner of any irreducible morphism in $\mod \Lambda$.

In conclusion, the claim is proved. Then by Theorem 3.5 and Corollary 3.7, we have
$|\Det(\Lambda)|=(n-1)+(n-p-q)=2n-p-q-1$.
\end{proof}

As a consequence of Theorems 3.13 and 3.15, we get the following

\begin{corollary}
For any admissible ideal $I$ of $KQ$, we have
$$|\Det(KQ/I)| \leq |\Det(KQ)|.$$
\end{corollary}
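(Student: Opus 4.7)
The plan is to derive the inequality by direct substitution into the two formulas of Theorems~3.13 and~3.15, once we establish a purely combinatorial dichotomy: \emph{in any orientation of the $\mathbb{A}_n$ quiver, $r = 1$ if and only if $p = 0$}.

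First I would prove this dichotomy by analyzing the shape of $Q$. If $Q$ has a unique sink $i$, then every arrow of $Q$ must point toward $i$, forcing $Q$ to have the form $1 \to \cdots \to i \leftarrow \cdots \leftarrow n$ (with the convention that one of the two branches is empty when $i \in \{1, n\}$); in each of these cases the only sources lie in $\{1, n\}$, so no source is interior and $p = 0$. Conversely, if $r \geq 2$ and $i_1 < i_2$ are two sinks, then along the path from $i_1$ to $i_2$ the arrow direction must change from rightward to leftward at $i_1$ and then back to rightward before reaching $i_2$; the latter change corresponds to a source strictly between $i_1$ and $i_2$, which is necessarily interior, so $p \geq 1$.

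Given this dichotomy, the two cases of the corollary are immediate. When $r = 1$, Theorem~3.15 yields $|\Det(KQ/I)| = 2n-2$, and the dichotomy together with Theorem~3.13 gives $|\Det(KQ)| = 2n-2$, so equality holds. When $r \geq 2$, Theorem~3.15 yields $|\Det(KQ/I)| = 2n-p-q-1$, while $p \geq 1$ and Theorem~3.13 give $|\Det(KQ)| = 2n-p-1$; the desired inequality then reduces to $q \geq 0$, which is trivial since $q$ is defined as a cardinality.

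There is no substantial obstacle to anticipate here: the only piece of real content is the alternation lemma $r = 1 \Leftrightarrow p = 0$, whose verification is a short inspection of arrow directions in an $\mathbb{A}_n$ orientation. Everything else is mechanical substitution, reflecting the fact that Theorems~3.13 and~3.15 are already phrased so that the effect of passing from $KQ$ to $KQ/I$ is to subtract exactly $q$, the number of nonzero sink ideals, from the count.
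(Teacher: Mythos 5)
Your proof is correct and follows exactly the route the paper intends: the corollary is stated there simply ``as a consequence of Theorems 3.13 and 3.15,'' i.e.\ by comparing the two formulas. Your alternation lemma $r=1 \Leftrightarrow p=0$ is precisely the (implicit) combinatorial fact needed to align the case distinctions of the two theorems, and your verification of it, together with the observation $q\geq 0$, is sound.
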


\section{Some examples}

In this section, we give some examples to illustrate the results obtained in Section 3.

\begin{example}
Let $\Lambda=KQ$, where $Q$ is a quiver of type $\mathbb{A}_{2n}$$(n \geq 2)$ with the orientations of the arrows left-and-right
as follows.
$$Q:=1 \buildrel {\alpha_1} \over\longrightarrow 2 \buildrel {\alpha_2}
\over\longleftarrow 3 \buildrel {\alpha_3} \over\longrightarrow 4 \buildrel {\alpha_4}
\over\longleftarrow \cdots \buildrel {\alpha_{2n-2}}
\over\longleftarrow 2n-1 \buildrel {\alpha_{2n-1}} \over\longrightarrow 2n.$$
Then by the proof of Theorem 3.13, we have
\begin{enumerate}
\item[]
\ \ \ \ $\{$the minimal right determiners of monic irreducible morphisms in $\mod \Lambda\}$\\
$=\{P(1),P(2i)\mid 1\leq i \leq n\}$.
\end{enumerate}
Note that the two longest strings here are
$$\alpha_{2n-1} \alpha_{2n-2}^{-1} \cdots \alpha_3 \alpha_2^{-1} \alpha_1,$$
$$\alpha_1^{-1} \alpha_2 \alpha_3^{-1} \cdots \alpha_{2n-2} \alpha_{2n-1}^{-1}.$$
Because $N_{\alpha_1}= \alpha_2^{-1} \alpha_1 \epsilon_1$, we have
$$M(V_{\alpha_1})=M(\epsilon_1)=I(1).$$
Because $N(\alpha_{2i})= \alpha_{2i-1}^{-1} \alpha_{2i} \alpha_{2i+1}^{-1}$, we have
$$M(V_{\alpha_{2i}})=M(\alpha_{2i+1}^{-1})=[2i+1,2i+2],$$ where $1 \leq i \leq n-1$ and $I(2n)=[2n-1,2n]$.
Because $N(\alpha_{2i+1})= \alpha_{2i+2}^{-1} \alpha_{2i+1} \alpha_{2i}^{-1}$, we have
$$M(V_{\alpha_{2i+1}})=M(\alpha_{2i}^{-1})=[2i,2i+1],$$ where $1 \leq i \leq n-2$.
Because $N(\alpha_{2n-1})= \epsilon_{2n} \alpha_{2n-1} \alpha_{2n-2}^{-1}$, we have
$$M(V_{\alpha_{2n-1}})=M(\alpha_{2n-2}^{-1})=[2n-2,2n-1].$$
Then by Corollary 3.7, we have
\begin{enumerate}
\item[]
\ \ \ \ $\{$the minimal right determiners of epic irreducible morphisms in $\mod \Lambda\}$\\
$=\{I(1), [j,j+1], I(2n)\mid 2 \leq j \leq 2n-2\}.$
\end{enumerate}
Therefore we conclude that
$$\Det(\Lambda)=\{P(1),P(2i), I(1), [j,j+1], I(2n)\mid 1 \leq i \leq n\ {\rm and}\ 2 \leq j \leq 2n-2\},$$
and $|\Det(\Lambda)|=3n$.
\end{example}

\begin{example}
Let $Q$ be the quiver
$$\xymatrix@-10pt{1 \ar[r]^{\alpha_1} &2 &3\ar[l]_{\alpha_2}\ar[r]^{\alpha_3} &4 \ar[r]^{\alpha_4} &5
&6\ar[l]_{\alpha_5} &7\ar[l]_{\alpha_6} &8\ar[l]_{\alpha_7}\ar[r]^{\alpha_8} &9\ar[r]^{\alpha_9}
&10\ar[r]^{\alpha_{10}} &11\ar[r]^{\alpha_{11}} &12\ar[r]^{\alpha_{12}} &13 }
$$
and $I=<\alpha_4 \alpha_3, \alpha_5 \alpha _6 \alpha_7, \alpha_9 \alpha_8, \alpha_{12} \alpha_{11}>$
an admissible ideal of $KQ$ as in Section 3.

(1) Let $\Lambda=KQ$. Then by the proof of Theorem 3.13, we have
\begin{enumerate}
\item[]
\ \ \ \ $\{$the minimal right determiners of monic irreducible morphisms in $\mod \Lambda\}$\\
$=\{P(i)\mid 1\leq i \leq 13\ {\rm but}\ i \neq 3,8\}$.
\end{enumerate}
Because $M(V_{\alpha_1})=M(\epsilon_1)=S(1)$, $M(V_{\alpha_2})=[3,5]$,
$M(V_{\alpha_3})=[2,3]$, $M(V_{\alpha_4})=S(4)$, $M(V_{\alpha_5})=S(6)$, $M(V_{\alpha_6})=S(7)$,
$M(V_{\alpha_7})=[8,13]$, $M(V_{\alpha_8})=[5,8]$, $M(V_{\alpha_9})=S(9)$,
$M(V_{\alpha_{10}})=S(10)$, $M(V_{\alpha_{11}})=S(11)$ and $M(V_{\alpha_{12}})=S(12)$, by Corollary 3.7 we have
\begin{enumerate}
\item[]
\ \ \ \ $\{$the minimal right determiners of epic irreducible morphisms in $\mod \Lambda\}$\\
$=\{S(1), [3,5], [2,3], S(4), S(6), S(7), [8,13], [5,8], S(9), S(10), S(11),S(12)\}.$
\end{enumerate}
Therefore we conclude that
$$\Det(\Lambda)=\{P(i), S(1), S(4), S(6), S(7), S(9), S(10), S(11),S(12),$$
$$\ \ \ \ \ \ \ \ \ [3,5], [2,3], [8,13], [5,8] \mid 1\leq i \leq 13\ {\rm but}\ i \neq 3,8\}$$
and $|\Det(\Lambda)|=23$.

(2) Let $\Lambda=KQ/I$. Then by the proof of Theorem 3.15, we have
\begin{enumerate}
\item[]
\ \ \ \ $\{$the minimal right determiners of monic irreducible morphisms in $\mod \Lambda\}$\\
$=\{P(i)\mid 1\leq i \leq 13\ {\rm but}\ i \neq 3,5,8,13\}$.
\end{enumerate}
Because $M(V_{\alpha_1})=M(\epsilon_1)=S(1)$, $M(V_{\alpha_2})=[3,4]$, $M(V_{\alpha_3})=[2,3]$, $M(V_{\alpha_4})=S(4)$,
$M(V_{\alpha_5})=S(6)$, $M(V_{\alpha_6})=S(7)$, $M(V_{\alpha_7})=[8,9]$, $M(V_{\alpha_8})=[6,8]$, $M(V_{\alpha_9})=S(9)$,
$M(V_{\alpha_{10}})=S(10)$, $M(V_{\alpha_{11}})=S(11)$ and $M(V_{\alpha_{12}})=S(12)$,
by Corollary 3.7 we have
\begin{enumerate}
\item[]
\ \ \ \ $\{$the minimal right determiners of epic irreducible morphisms in $\mod \Lambda\}$\\
$=\{S(1), [3,4], [2,3], S(4), S(6), S(7), [8,9], [6,8], S(9), S(10), S(11), S(12)\}.$
\end{enumerate}
Therefore we conclude that
$$\Det(\Lambda)=\{P(i), S(1), S(4), S(6), S(7), S(9), S(10), S(11),S(12),$$
$$\ \ \ \ \ \ \ \ \ \ \ \ \ \ [3,4], [2,3], [8,9], [6,8] \mid 1\leq i \leq 13\ {\rm but}\ i \neq 3,5,8,13\}$$
and $|\Det(\Lambda)|=21$.
\end{example}

\bigskip

{\bf Acknowledgement.}
This research was partially supported by NSFC (Grant No. 11571164) and a Project Funded by the Priority
Academic Program Development of Jiangsu Higher Education Institutions. The authors would like to thank
Claus M. Ringel for his helpful suggestions on the result about string algebras and the number of almost
split sequences of type (2.1). The authors also thank the referee for the useful comments.

\end{document}